\def\R{\mathbb R}
\def\N{\mathbb N}
\newcommand*{\prox}{\rm{prox}}
\def\int{{\rm int\,}}
\theoremstyle{plain}
\newtheorem{theorem}{Theorem}[section]
\theoremstyle{theorem}
\numberwithin{theorem}{section}
\newtheorem{lemma}[theorem]{Lemma}
\newtheorem{proposition}[theorem]{Proposition}
\theoremstyle{definition}
\newtheorem{definition}[theorem]{Definition}
\newtheorem{example}[theorem]{Example}
\newtheorem{remark}[theorem]{Remark}
\title{A novel  neural network   with predefined-time stability for solving generalized monotone inclusion problems with applications 
}
\author{ Nam V. Tran\thanks{ Faculty of Applied Sciences, University of Technology and Engineering HCMC, Ho Chi Minh City, Vietnam; email: namtv@hcmute.edu.vn}
}
\begin{document}
	\maketitle
	

	
	\abstract{We propose a forward--backward splitting dynamical framework for solving inclusion
		problems of the form \(0 \in F(x) + G(x)\) in Hilbert spaces, where \(F\) is a
		maximal set-valued operator and \(G\) is a single-valued mapping. The analysis is
		conducted under a generalized monotonicity assumption, which relaxes the
		classical monotonicity conditions commonly imposed in the literature and thereby
		extends the applicability of the proposed approach.
		
		Under mild conditions on the system parameters, we establish both fixed-time and
		predefined-time stability of the resulting dynamical system. The fixed-time
		stability guarantees a uniform upper bound on the settling time that is
		independent of the initial condition, whereas the predefined-time stability
		framework allows the system parameters to be selected \emph{a priori} in order
		to ensure convergence within a user-specified time horizon.
		
		Moreover, we investigate an explicit forward Euler discretization of the
		continuous-time dynamics, leading to a novel forward--backward iterative
		algorithm. A rigorous convergence analysis of the resulting discrete scheme is
		provided. Finally, the effectiveness and versatility of the proposed method are
		illustrated through several classes of problems, including constrained
		optimization problems, mixed variational inequalities, and variational
		inequalities, together with numerical experiments that corroborate the
		theoretical results.

	}
	

	
	
	\maketitle
	
	\section{Introduction}
Let $ \mathscr{H}$ be a real finite-dimensional Hilbert space endowed with the inner product 
$\langle \cdot,\cdot \rangle$ and the induced norm $\|\cdot\|$. 
This paper is devoted to the study of the following inclusion problem: find $z^*\in \mathscr{H}$ such that
\begin{equation}\label{inc1}
	0 \in F(z^*) + G(z^*),
\end{equation}
where $F~:\mathscr{H}\to 2^\mathscr{H}$ is a set-valued operator and $G:\mathscr{H}\to \mathscr{H}$ is a single-valued operator. 
Problem \eqref{inc1} provides a unified modeling framework for numerous mathematical problems arising in optimization, variational inequalities, equilibrium and saddle-point problems, Nash equilibrium problems in noncooperative games, and fixed-point problems; see, for example, \cite{BlumOettli94, Cav, NHVF}.

A powerful approach for solving inclusion problems relies on continuous-time dynamical systems. 
Due to their simple structure and low computational burden, such systems have been extensively studied and successfully applied to inclusion problems, mixed variational inequalities, variational inequalities, and constrained optimization problems; see, e.g., 
\cite{AAS, BC_SICON, BCJMAA, BotCV, JU4, LIU, VS20, ZHU}. 
Representative contributions include the study of global exponential stability for forward--backward--forward dynamical system \cite{22}, the analysis of second-order forward--backward dynamics for inclusion problems \cite{BC_SICON}, and the investigation of Newton-like inertial dynamics with Tikhonov regularization \cite{BOT3}. 
Nevertheless, the majority of existing works focus on asymptotic stability or exponential stability of the associated dynamical systems \cite{27,26,29,18,20,21,22}.

Motivated by the notion of finite-time convergence introduced in \cite{BHAT}, recent studies have proposed dynamical systems that converge to an equilibrium within a finite settling time \cite{JU6, LI}. 
However, in these systems the settling time typically depends on the initial condition and may become arbitrarily large as the initial state moves away from the solution. 
This dependence significantly limits their applicability in practice, especially in scenarios such as robotics and networked systems where accurate initial states are often unavailable. 
To address this issue, the concept of fixed-time convergence was introduced, guaranteeing convergence within a uniform upper bound independent of the initial condition. 
This notion was first proposed by Polyakov \cite{polyakov} and has since been further developed in a variety of contexts; see, for instance, \cite{Garg21, JU3, SAN, WANG, WANG2, WANG3, ZUO}.

Finite-time and fixed-time stability have found wide applications in optimization and control, including multi-agent systems \cite{ZUO}, sparse optimization \cite{GARG4, HE, YU}, unconstrained and constrained optimization \cite{CHEN, GAR, ROMEO}, and equilibrium problems \cite{JU3, JU5, JU6}. 
Within the framework of inclusion problems, finite-time and fixed-time stability properties have been investigated using first-order and proximal-type dynamical systems \cite{NH, NHVF}. 
In particular, fixed-time stability results for generalized monotone inclusion problems were established in \cite{NHVF}.

More recently, an even stronger notion of stability, referred to as \emph{predefined-time stability}, was introduced in \cite{Sanchez2014}. 
In contrast to fixed-time stability, predefined-time stability allows the settling time to be explicitly prescribed in advance via suitable system parameters, while remaining independent of the initial condition. 
This property provides an important advantage in applications requiring strict convergence-time guarantees.

Despite the growing literature on finite-time and fixed-time stability, predefined-time stability for generalized monotone inclusion problems has not yet been studied. 
This paper aims to bridge this gap.

Specifically, we investigate a forward--backward splitting first-order dynamical system derived from a fixed-point reformulation of \eqref{inc1} (see, e.g., \cite{AM, Bot, BSV, V0}). 
Under generalized monotonicity assumptions, we establish both fixed-time and predefined-time stability of the proposed dynamical system. 
In addition, we propose a discrete-time counterpart leading to a relaxed forward--backward algorithm and prove the linear convergence of the generated sequence to the unique solution of the inclusion problem.

It is worth emphasizing that classical monotonicity assumptions are often restrictive, as in many applications the associated operators fail to satisfy this property. 
In this work, we adopt a generalized monotonicity framework that allows the monotonicity modulus to take negative values, thereby extending the applicability of the proposed approach to a broader class of operators.

The main contributions of this paper can be summarized as follows.
\begin{enumerate}
	\item We propose a forward--backward splitting dynamical system exhibiting fixed-time and predefined-time stability for solving inclusion problems, together with explicit upper bounds on the settling time.
	
	\item We demonstrate the applicability of the proposed framework to constrained optimization problems, mixed variational inequalities, and variational inequalities by treating them as special cases of inclusion problems.
	
	\item Our analysis is conducted under generalized monotonicity assumptions, relaxing the classical monotonicity and strong monotonicity requirements commonly imposed in the literature.
\end{enumerate}
The remaining sections of the article are organized as follows.
Section~\ref{Preliminaries} introduces basic definitions and preliminary results.
Section~\ref{sec3} is devoted to the presentation and analysis of the proposed forward--backward dynamical system.
Section~\ref{sec4} studies consistent discretizations and convergence properties of the discrete scheme.
Section~\ref{sec5} discusses applications to constrained optimization problems, mixed variational inequalities, and variational inequalities.
Numerical experiments are presented  in Section~\ref{num}.
Finally, concluding remarks are given in Section~\ref{sec6}.
\section{Preliminaries} \label{Preliminaries}

This section recalls several basic notions and preliminary results from convex analysis and operator theory that will be used throughout the paper.

\subsection{Preliminary Notions on functions}
A proper, convex, and lower semicontinuous (l.s.c.) function $h: \mathscr{H}\to\mathbb{R}\cup\{+\infty\}$ is said to be \emph{subdifferentiable} at a point $z\in \mathscr{H}$ if the set
\[
\partial h(z)
=
\big\{
u\in \mathscr{H}:\;
h(w)\ge h(z)+\langle u,w-z\rangle,
\ \forall y\in \mathscr{H}
\big\}
\]
is nonempty. We call the set $\partial h(z)$ \emph{subdifferential} of $ \mathscr{H}$ at $z$, $u\in\partial h(z)$ is referred to as a subgradient of $ \mathscr{H}$ at $z$. 
We say that the function $ \mathscr{H}$ is subdifferentiable on $ \mathscr{H}$ if it is subdifferentiable at every point of $ \mathscr{H}$.

Observe that if $ \mathscr{H}$ is convex, l.s.c., and has full domain, then it is continuous on $ \mathscr{H}$ \cite[Corollary~8.30]{BauschkeCombettes}. 
Consequently, $ \mathscr{H}$ is subdifferentiable everywhere on $ \mathscr{H}$ \cite[Proposition~16.14]{BauschkeCombettes}. 
Moreover, if $\phi$ and $ \mathscr{H}$ are proper, convex, and l.s.c. functions satisfying 
$\mathrm{dom}\phi\cap\mathrm{int\,dom}h\neq\emptyset$ or $\mathrm{dom}(h)=H$, where
$\mathrm{dom}\phi=\{z\in \mathscr{H}:\,\phi(z)<+\infty\}$, then the subdifferential sum rule holds:
\[
\partial(\phi+h)=\partial \phi+\partial h
\]
(see \cite[Corollary~16.38]{BauschkeCombettes}).

Let $\Omega\subset H$ be a nonempty, closed, and convex set. The normal cone to $C$ at a point $z\in \Omega$ is defined by
\[
N_{\Omega}(z)
=
\big\{
w\in \mathscr{H}:\;
\langle w,z-w\rangle\ge 0,
\ \forall w\in \Omega 
\big\},
\]
and $N_{\Omega}(z)=\emptyset$ whenever $z\notin \Omega$. The indicator function of $\Omega$ is given by
$i_{\Omega}(z)=0$ if $z\in \Omega$ and $i_{\Omega}(z)=+\infty$ otherwise. It is well known that
$\partial i_{\Omega}(z)=N_{\Omega}(z)$ for all $z\in \mathscr{H}$.

For any $z\in \mathscr{H}$, the metric projection of $z$ onto $\Omega$ is defined as
\[
P_{\Omega}(z)
=
\arg\min_{w\in \Omega}\|w-z\|.
\]
It is well known that the projection $P_{\Omega}(z)$ is well defined and unique whenever $\Omega$ is a closed and convex set.

Finally, let $\phi: \mathscr{H}\to\overline{\mathbb{R}}$. The proximity operator of $\phi$ with parameter $\gamma>0$ is defined by
\[
\mathrm{prox}_{\gamma \phi}(z)
=
\arg\min_{w\in \mathscr{H}}
\left\{
\phi(w)+\frac{1}{2\gamma}\|z-w\|^2
\right\},
\qquad z\in \mathscr{H}.
\]
Further background on these concepts can be found in \cite{BauschkeCombettes}.

\subsection{Preliminary Notions on Operators}

Let $F~:\mathscr{H}\to 2^\mathscr{H}$ be a set-valued operator. Its domain is defined by
\[
\mathrm{dom}\,F
=
\{z\in \mathscr{H}:\,F(z)\neq\emptyset\},
\]
and its graph is given by
\[
Gr(F)=\{(z,u)\in \mathscr{H}\times \mathscr{H}:\,u\in F(z)\}.
\]

\begin{definition} We call an operator $F~:\mathscr{H}\to 2^\mathscr{H}$  \emph{$\eta_F$-monotone} if there exists a constant
	$\eta_F\in\mathbb{R}$ such that
	\[
	\langle z-y,u-w\rangle
	\ge
	\eta_F\|z-y\|^2,
	\quad
	\forall z,y\in \mathscr{H},\ u\in F(z),\ w\in F(y).
	\]
\end{definition}

\begin{definition}
A $\eta_F$-monotone operator $F$ is called \emph{maximal} if its graph is not properly contained in the graph of any other $\eta_F$-monotone operator $F': \mathscr{H}\to 2^\mathscr{H}$ \cite{Minh}.
\end{definition}

\begin{remark}
	The constant $\eta_F$ may take negative values. The case $\eta_F=0$ corresponds to classical monotonicity, while $\eta_F>0$ yields strong monotonicity. If $\eta_F<0$, the operator is referred to as weakly monotone.
\end{remark}

A fundamental example of a maximal monotone operator is the subdifferential $\partial f$ of a proper, convex, and l.s.c. function $F~:\mathscr{H}\to(-\infty,+\infty]$, $f\not\equiv+\infty$; see, for example, \cite{ROCK}.

\begin{definition}
	An operator $G: \mathscr{H}\to \mathscr{H}$ is said to be
	\begin{itemize}
		\item Lipschitz continuous with constant $L\ge 0$ if
		$\|G(z)-G(w)\|\le L\|z-w\|$ for all $z,w\in \mathscr{H}$;
		\item $\lambda$-cocoercive if
		\[
		\langle G(z)-G(w),z-w\rangle
		\ge
		\lambda\|G(z)-G(z)\|^2
		\]
		for all $z,w\in \mathscr{H}$.
	\end{itemize}
\end{definition}

By the Cauchy--Schwarz inequality, any $\lambda$-cocoercive operator is $\frac{1}{\lambda}$-Lipschitz continuous.

The resolvent of an operator $F~:\mathscr{H}\to 2^\mathscr{H}$ with parameter $\gamma>0$ is defined as
\[
J_{\gamma F}
=
(Id+\gamma F)^{-1},
\]
where $Id$ denotes the identity operator. Without monotonicity, the resolvent of a maximal operator may fail to be single-valued.

\begin{lemma}\label{GENMON}\cite{Minh}
	Consider an $\eta_F$-monotone set-valued mapping $F : \mathscr{H} \to 2^\mathscr{H}$ and a single-valued operator $G : \mathscr{H} \to \mathscr{H} $, along with a positive parameter $\gamma$ such that $1 + \gamma \eta_F > 0$. Then the following hold:
	\begin{enumerate}
		\item $J_{\gamma F}$ is single-valued;
		\item $J_{\gamma F}$ is $(1+\gamma\eta_F)$-cocoercive;
		\item $\mathrm{dom}\,J_{\gamma F}=H$ if and only if $F$ is maximal $\eta_F$-monotone.
	\end{enumerate}
\end{lemma}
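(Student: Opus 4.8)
The plan is to derive all three assertions from the defining $\eta_F$-monotonicity inequality applied to resolvent points, with the surjectivity statement in part~(3) reduced to Minty's theorem through an affine reparametrization of the graph.

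For (1) and (2) the mechanism is the same. If $u_1,u_2\in J_{\gamma F}(z)$ then $\gamma^{-1}(z-u_i)\in F(u_i)$ for $i=1,2$; inserting the pairs $\bigl(u_1,\gamma^{-1}(z-u_1)\bigr)$ and $\bigl(u_2,\gamma^{-1}(z-u_2)\bigr)$ into the $\eta_F$-monotonicity inequality makes the left-hand side collapse to $-\gamma^{-1}\|u_1-u_2\|^2$, so that $0\ge\gamma^{-1}(1+\gamma\eta_F)\|u_1-u_2\|^2$; since $\gamma>0$ and $1+\gamma\eta_F>0$ this forces $u_1=u_2$, hence single-valuedness. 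For cocoercivity, with $u=J_{\gamma F}(z)$ and $v=J_{\gamma F}(w)$ one has $\gamma^{-1}(z-u)\in F(u)$ and $\gamma^{-1}(w-v)\in F(v)$; applying $\eta_F$-monotonicity to these two pairs, multiplying through by $\gamma$, and moving the $-\|u-v\|^2$ term across yields $\langle z-w,\,u-v\rangle\ge(1+\gamma\eta_F)\|u-v\|^2$, which is precisely $(1+\gamma\eta_F)$-cocoercivity of $J_{\gamma F}$. The only point requiring care here is bookkeeping of the scalar $\gamma$.

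For (3), I would first record that $(x,y)\mapsto(x,\,y-\eta_F x)$ is a linear isomorphism of $\mathscr{H}\times\mathscr{H}$ carrying $Gr(F)$ onto $Gr(A)$, where $A:=F-\eta_F\,Id$, turning the $\eta_F$-monotonicity inequality into plain monotonicity and preserving graph inclusions; consequently $F$ is maximal $\eta_F$-monotone if and only if $A$ is maximal monotone. Since $1+\gamma\eta_F>0$, one may write $Id+\gamma F=(1+\gamma\eta_F)\bigl(Id+\mu A\bigr)$ with $\mu:=\gamma/(1+\gamma\eta_F)>0$, so $\mathrm{dom}\,J_{\gamma F}=\mathscr{H}$ is equivalent to surjectivity of $Id+\mu A$. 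If $F$ is maximal $\eta_F$-monotone, then $A$ is maximal monotone and Minty's theorem delivers exactly this surjectivity. For the converse, suppose $\mathrm{dom}\,J_{\gamma F}=\mathscr{H}$ but $F$ is not maximal; choose $(z_0,u_0)\in Gr(F')\setminus Gr(F)$ for some $\eta_F$-monotone $F'$ properly extending $F$, set $p:=z_0+\gamma u_0$, and use the domain hypothesis to find $q$ with $\gamma^{-1}(p-q)\in F(q)\subseteq F'(q)$. Feeding $(z_0,u_0)$ and $\bigl(q,\gamma^{-1}(p-q)\bigr)$ into the $\eta_F$-monotonicity of $F'$ and using $u_0=\gamma^{-1}(p-z_0)$, the same collapse as in~(1) forces $z_0=q$; then $u_0=\gamma^{-1}(p-q)\in F(q)=F(z_0)$, contradicting the choice of $(z_0,u_0)$. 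Hence $F$ is maximal.

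The main obstacle is really only the ``$\Leftarrow$'' implication in~(3): everything else is a one-line manipulation of the monotonicity inequality, whereas that implication rests on Minty's surjectivity theorem for maximal monotone operators, invoked via the factorization $Id+\gamma F=(1+\gamma\eta_F)(Id+\mu A)$ together with the preservation of maximality under the affine shift by $\eta_F\,Id$. (The single-valued operator $G$ appearing in the statement plays no role in this lemma.)
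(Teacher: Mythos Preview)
The paper does not supply its own proof of this lemma; it is simply quoted from \cite{Minh} and used as a black box. So there is no proof in the paper to compare against.

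That said, your argument is correct and is essentially the standard route one finds in the source. Parts~(1) and~(2) are indeed one-line consequences of inserting resolvent pairs into the $\eta_F$-monotonicity inequality, and your bookkeeping is right: from $-\gamma^{-1}\|u_1-u_2\|^2\ge\eta_F\|u_1-u_2\|^2$ you get $0\ge\gamma^{-1}(1+\gamma\eta_F)\|u_1-u_2\|^2$, and the cocoercivity computation is equally clean. For part~(3), the affine shift $A:=F-\eta_F\,Id$ together with the factorization $Id+\gamma F=(1+\gamma\eta_F)(Id+\mu A)$, $\mu=\gamma/(1+\gamma\eta_F)>0$, is exactly how one reduces to classical Minty; the bijection $(x,y)\mapsto(x,y-\eta_F x)$ on graphs does preserve maximality in both directions. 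Your converse argument is also sound: the point $(q,\gamma^{-1}(p-q))$ lies in $Gr(F)\subseteq Gr(F')$, so the $\eta_F$-monotonicity of $F'$ legitimately applies to the pair $\bigl((z_0,u_0),(q,\gamma^{-1}(p-q))\bigr)$ and forces $z_0=q$, hence $u_0\in F(z_0)$.

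Your closing parenthetical is apt: the operator $G$ in the hypothesis is indeed irrelevant to the lemma and appears to be a copy-paste artifact from the surrounding statements in the paper.
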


We denote by $\mathrm{Fix}(F)$ the set of fixed points of an operator $F$, and by $\mathrm{zer}\,F$ its set of zeros.

\begin{lemma}\label{lem fixed point}\cite{NHVF}
	Consider an $\eta_F$-monotone set-valued mapping $F : \mathscr{H} \to 2^\mathscr{H}$ and a single-valued operator $G : \mathscr{H} \to \mathscr{H} $, along with a positive parameter $\gamma$ such that $1 + \gamma \eta_F > 0$. Then $z^*\in \mathscr{H}$ solves the inclusion problem \eqref{inc1} if and only if
	\[
	J_{\gamma F}(z^*-\gamma G(z^*))=z^*,
	\]
	that is,
	$z^*\in\mbox{\rm Fix}\left(J_{\gamma F}(Id-\gamma G)\right)$.
\end{lemma}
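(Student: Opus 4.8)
The plan is to prove the stated equivalence by unwinding the definition of the resolvent $J_{\gamma F}=(Id+\gamma F)^{-1}$ through elementary manipulations of set-valued inclusions, invoking Lemma~\ref{GENMON} only at the single point where a membership relation must be upgraded to an equality. The argument is the standard forward--backward fixed-point reformulation, so no substantial obstacle is expected; the only point requiring care is keeping track of where the hypothesis $1+\gamma\eta_F>0$ is actually used.

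For the ``only if'' direction, I would start from the assumption that $z^*$ solves \eqref{inc1}, i.e.\ $-G(z^*)\in F(z^*)$. Multiplying this inclusion by the positive scalar $\gamma$ and adding $z^*$ to both sides yields
\[
z^*-\gamma G(z^*)\ \in\ z^*+\gamma F(z^*)=(Id+\gamma F)(z^*),
\]
which by definition of the resolvent is exactly $z^*\in J_{\gamma F}\big(z^*-\gamma G(z^*)\big)$. Since $1+\gamma\eta_F>0$, part~(1) of Lemma~\ref{GENMON} guarantees that $J_{\gamma F}$ is single-valued, so the set $J_{\gamma F}\big(z^*-\gamma G(z^*)\big)$ reduces to the single point $z^*$; hence $J_{\gamma F}\big(z^*-\gamma G(z^*)\big)=z^*$, i.e.\ $z^*\in\mathrm{Fix}\big(J_{\gamma F}(Id-\gamma G)\big)$.

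For the converse, I would assume $J_{\gamma F}\big(z^*-\gamma G(z^*)\big)=z^*$ and simply read the definition of the resolvent backwards: this equality is equivalent to $z^*-\gamma G(z^*)\in(Id+\gamma F)(z^*)=z^*+\gamma F(z^*)$, and subtracting $z^*$ and dividing by $\gamma>0$ gives $-G(z^*)\in F(z^*)$, that is, $0\in F(z^*)+G(z^*)$. I would close by remarking that every step except the single-valuedness invoked in the first direction is a purely algebraic reformulation valid for an arbitrary (possibly multivalued) operator $F$ and any $\gamma>0$, so the hypothesis $1+\gamma\eta_F>0$ enters the proof only to ensure $J_{\gamma F}$ is a genuine (single-valued) map and hence that the fixed-point equation is meaningful.
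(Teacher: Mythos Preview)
Your proof is correct and is precisely the standard forward--backward fixed-point reformulation one would expect here. The paper itself does not prove this lemma at all; it is stated with a citation to \cite{NHVF} and no argument is given in the present text, so there is nothing to compare against beyond noting that your derivation supplies exactly the routine justification the authors chose to omit.
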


For additional results on monotone operators and their applications in 
optimization and variational analysis, see 
\cite{AVR,BauschkeCombettes,Minh}.

We denote by $\mathrm{Fix}(F)$ the set of fixed points of an operator $F$, and by 
$\mathrm{zer}\,F$ its set of zeros.

\begin{lemma}\label{lem fixed point}\cite{NHVF}
	Consider an $\eta_F$-monotone set-valued mapping $F : \mathscr{H} \to 2^\mathscr{H}$ and a single-valued operator $G : \mathscr{H} \to \mathscr{H} $, along with a positive parameter $\gamma$ such that $1 + \gamma \eta_F > 0$. Then $z^*\in \mathscr{H}$ solves the inclusion 
	problem \eqref{inc1} if and only if
	\[
	J_{\gamma F}(z^*-\gamma G(z^*))=z^*,
	\]
	that is, $z^*\in\mbox{\rm Fix}\left(J_{\gamma F}(Id-\gamma G)\right)$.
\end{lemma}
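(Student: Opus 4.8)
The plan is to establish the equivalence in Lemma~\ref{lem fixed point} by a direct computation with the resolvent, using Lemma~\ref{GENMON} to justify that $J_{\gamma F}$ is single-valued under the hypothesis $1+\gamma\eta_F>0$. First I would fix $\gamma>0$ with $1+\gamma\eta_F>0$, so that by Lemma~\ref{GENMON}(1) the operator $J_{\gamma F}=(Id+\gamma F)^{-1}$ is a single-valued map on its domain; combined with Lemma~\ref{GENMON}(3), if $F$ is maximal $\eta_F$-monotone then $J_{\gamma F}$ is in fact defined on all of $\mathscr{H}$, so the composition $J_{\gamma F}(Id-\gamma G)$ is a well-defined single-valued operator on $\mathscr{H}$ and the statement ``$z^*\in\mathrm{Fix}(J_{\gamma F}(Id-\gamma G))$'' is meaningful.

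The core of the argument is a chain of equivalences manipulating the defining relation of the resolvent. Starting from $0\in F(z^*)+G(z^*)$, I would rewrite this as $-\gamma G(z^*)\in\gamma F(z^*)$, then add $z^*$ to both sides to obtain $z^*-\gamma G(z^*)\in z^*+\gamma F(z^*)=(Id+\gamma F)(z^*)$. Applying the inverse relation, this is equivalent to $z^*\in(Id+\gamma F)^{-1}(z^*-\gamma G(z^*))=J_{\gamma F}(z^*-\gamma G(z^*))$. Since $J_{\gamma F}$ is single-valued, the membership ``$z^*\in J_{\gamma F}(z^*-\gamma G(z^*))$'' is the same as the equality ``$J_{\gamma F}(z^*-\gamma G(z^*))=z^*$'', which is precisely the assertion that $z^*$ is a fixed point of $J_{\gamma F}(Id-\gamma G)$. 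Each of these steps is reversible, so the two conditions are equivalent.

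The one point requiring care — and the main (mild) obstacle — is the passage between set membership and equality: in the absence of single-valuedness, $z^*\in J_{\gamma F}(y)$ and $J_{\gamma F}(y)=z^*$ would not coincide, and the notion of $\mathrm{Fix}$ of a set-valued map would have to be interpreted accordingly. This is exactly where the hypothesis $1+\gamma\eta_F>0$ is essential: it is the condition under which Lemma~\ref{GENMON} guarantees $J_{\gamma F}$ is single-valued, legitimizing the clean fixed-point formulation. No maximality is needed for the equivalence itself; maximality (via Lemma~\ref{GENMON}(3)) is only invoked if one additionally wants the composed operator to have full domain $\mathscr{H}$. I would therefore present the proof as the short reversible chain above, with an explicit remark pointing to Lemma~\ref{GENMON} at the step where single-valuedness is used.
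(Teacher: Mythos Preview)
Your argument is correct and is precisely the standard resolvent manipulation underlying this well-known fixed-point characterization. The paper does not supply its own proof of this lemma but simply cites \cite{NHVF}; your chain of equivalences, together with the appeal to Lemma~\ref{GENMON} for single-valuedness of $J_{\gamma F}$, is exactly the argument one expects there.
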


In the sequel we will use the below assumption.

\begin{itemize}
	\item [{\bf(A)}]
	$F$ is maximal $\eta_F$-monotone, $G$ is $\eta_G$-monotone and $L$-Lipschitz continuous and the parameter $\gamma>0$ such that   
	$1+\gamma \eta_F>0 $ and $ 2(\eta_F + \eta_G) + \gamma \eta_F^2 - \gamma L^2 > 0. 
	$	
\end{itemize}
\begin{remark}
	As shown in \cite{NHVF}, this assumption is satisfied if one of the following holds: 
	\begin{enumerate}[(a)]
		\item   $\eta_F+\eta_G>0. $
		\item $\eta_F+\eta_G=0$ and  $\eta_F^2>L^2$.
		\item  $\eta_F+\eta_G<0 $ and $\eta_F>L$.
	\end{enumerate}
\end{remark}
It is worth noting that the conditions stated in {\bf (A)} do not necessarily guarantee the (strong) monotonicity of the operator $F+G$. 
In particular, when $F$ is maximal monotone (i.e., $\eta_F = 0$) and $G$ is strongly monotone (i.e., $\eta_G > 0$), 
assumption {\bf (A)} is satisfied for any $\gamma \in (0, 2\eta_G/L^2)$.

The following proposition plays an important role in our convergence analysis of the proposed method.  

\begin{lemma} \cite{NHVF}\label{tr1}  Consider an $\eta_F$-monotone set-valued mapping $F : \mathscr{H} \to 2^\mathscr{H}$ and a single-valued operator $G : \mathscr{H} \to \mathscr{H} $, along with a positive parameter $\gamma$ such that  Assumption {\bf (A)} holds.  	Then,  there exists $c\in (0,1)$  such that 
	\begin{enumerate}[(a)]
		\item \label{patr1} The operator $B:=J_{\gamma F}\circ (Id-\gamma G)$ is Lipchitz continuous with constant $c$.
		\item\label{pbtr1} Fixed points of the operator $B$ exist and are uniquely determined. 
		\item\label{pctr1} $\|B(z)- z^*\|\leq c\|z-z^*\|$ for all $z^*\in \mbox{\rm Fix}(B)$. 
		\item \label{pdtr1} $	(1-c)\|z-z^*\|^2\leq \langle z-z^*, \Phi(z) \rangle $ for all \(z\in \mathscr{H}\setminus \{z^*\}\). 
		\item \label{petr1} $(1-c)\|z-z^*\|\leq \|\Phi(z)\|\leq (1+c)\|z-z^*\|.$
		\item \label{pftr1} $\langle z-z^*, \Phi(z) \rangle \leq (1+c)\|z-z^*\|^2 $ for all \(z\in \mathscr{H}\setminus \{z^*\}\)
	\end{enumerate}
\end{lemma}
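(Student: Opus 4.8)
The plan is to reduce the entire statement to the single quantitative contraction estimate in part~(a); parts~(b)--(f) then drop out from elementary manipulations with the triangle and Cauchy--Schwarz inequalities, once one recalls that $\Phi:=Id-B$ is the associated fixed-point residual operator (so that $\Phi(z)=z-J_{\gamma F}(z-\gamma G(z))$ and $z^*\in\mathrm{Fix}(B)\iff\Phi(z^*)=0$).

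For part~(a), I would fix $z,y\in\mathscr{H}$, set $u:=z-\gamma G(z)$ and $v:=y-\gamma G(y)$, so that $B(z)=J_{\gamma F}(u)$ and $B(y)=J_{\gamma F}(v)$. Since $1+\gamma\eta_F>0$ by Assumption~{\bf(A)}, Lemma~\ref{GENMON} gives that $J_{\gamma F}$ is $(1+\gamma\eta_F)$-cocoercive, and combining cocoercivity with Cauchy--Schwarz yields $\|B(z)-B(y)\|\le(1+\gamma\eta_F)^{-1}\|u-v\|$. Expanding
\[
\|u-v\|^2=\|z-y\|^2-2\gamma\langle z-y,G(z)-G(y)\rangle+\gamma^2\|G(z)-G(y)\|^2
\]
and using the $\eta_G$-monotonicity and $L$-Lipschitz continuity of $G$ gives $\|u-v\|^2\le(1-2\gamma\eta_G+\gamma^2L^2)\|z-y\|^2$, hence $B$ is Lipschitz with constant
\[
c_0:=\frac{\sqrt{1-2\gamma\eta_G+\gamma^2L^2}}{1+\gamma\eta_F}.
\]
The step that needs genuine care is checking $c_0<1$: squaring and clearing the positive denominator, $c_0<1$ is equivalent to $2(\eta_F+\eta_G)+\gamma\eta_F^2-\gamma L^2>0$, which is exactly the second inequality in Assumption~{\bf(A)}. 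One also has $1-2\gamma\eta_G+\gamma^2L^2\ge(1-\gamma L)^2\ge0$ since $\eta_G\le L$ (which itself follows by pairing Lipschitz continuity with monotonicity via Cauchy--Schwarz), so $c_0\in[0,1)$; replacing $c_0$ by $c:=\tfrac12(1+c_0)\in(0,1)$ — still an admissible Lipschitz constant as $c\ge c_0$ — gives the asserted $c\in(0,1)$, and none of the subsequent estimates is lost since they only weaken when $c_0$ is enlarged to $c$.

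Granting part~(a), part~(b) is immediate from the Banach fixed-point theorem, because $B$ is a contraction on the complete space $\mathscr{H}$ (and by Lemma~\ref{lem fixed point} this fixed point is the unique solution of \eqref{inc1}). Part~(c) is part~(a) specialized to $y=z^*\in\mathrm{Fix}(B)$. For parts~(d)--(f) I would write $\Phi(z)=(z-z^*)-(B(z)-z^*)$. Then $\langle z-z^*,\Phi(z)\rangle=\|z-z^*\|^2-\langle z-z^*,B(z)-z^*\rangle\ge(1-c)\|z-z^*\|^2$ by Cauchy--Schwarz and part~(c), giving~(d); the triangle inequality applied to the same splitting of $\Phi(z)$, again together with part~(c), gives $(1-c)\|z-z^*\|\le\|\Phi(z)\|\le(1+c)\|z-z^*\|$, which is~(e); and finally Cauchy--Schwarz combined with~(e) gives $\langle z-z^*,\Phi(z)\rangle\le\|z-z^*\|\,\|\Phi(z)\|\le(1+c)\|z-z^*\|^2$, which is~(f). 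I do not anticipate any obstacle beyond the verification of the constant in~(a).
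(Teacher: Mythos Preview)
Your proposal is correct and, for parts (d)--(f), follows essentially the same route as the paper: decompose $\Phi(z)=(z-z^*)-(B(z)-z^*)$ and feed in the contraction estimate~(c) via Cauchy--Schwarz and the triangle inequality (the paper derives the lower bound in~(e) from~(d) rather than from the reverse triangle inequality, but this is a cosmetic difference). The paper itself simply cites \cite{NHVF} for parts (a)--(c), so your explicit derivation of the Lipschitz constant $c_0=(1+\gamma\eta_F)^{-1}\sqrt{1-2\gamma\eta_G+\gamma^2L^2}$ and the verification that $c_0<1$ is equivalent to the second inequality in Assumption~{\bf(A)} goes beyond what the paper records; your replacement of $c_0$ by $\tfrac12(1+c_0)$ to guarantee $c>0$ strictly is also a valid precaution, since $c_0=0$ is not excluded by Assumption~{\bf(A)}.
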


\begin{proof}
	Parts \eqref{patr1}-\eqref{pctr1} were proved in \cite{NHVF}. We now prove Part \eqref{pdtr1}. 
	For all \(z\in \mathscr{H}\setminus \{z^*\},\) it holds that 
	\begin{equation}\label{eqn1} 
		\langle z-z^*, x-J_{\gamma F}(z-\gamma G(z)) \rangle =\|z-z^*\|^2+\langle z-z^*, z^*-J_{\gamma F} (z-\gamma G(z))\rangle.
	\end{equation}
	The Cauchy-Schwarz inequality and Part \eqref{pctr1}  yield
	\begin{equation}\label{eqn2} 
		\langle z-z^*, z^*-J_{\gamma F} (z-\gamma G(z))\rangle = \langle z-z^*, z^*-B(z)\rangle\geq -\|x-z^*\|\|z^*-B(z)\| \geq -c \|x-z^*\|^2 
	\end{equation} for some \(c\in (0,1)\). 
	Combining \eqref{eqn1} and \eqref{eqn2} we derive that
	$$\langle z-z^*, \Phi(z) \rangle \geq (1-c)\|z-z^*\|$$
	Hence, Part \eqref{pdtr1} is proved. 
	
	\noindent  As for  Part \eqref{petr1} one has  
	\begin{align*}
		\|\Phi(z)\| =\|\Phi(z)-\Phi(z^*)\|=\|z-z^*-\left(B(z)-B(z^*)\right)\| \leq \|z-z^*\|+\|B(z)-B(z^*)\| 
	\end{align*}
	Because $B$ is Lipschitz continuous with constant $c$, it follows that 
	\begin{align*}
		\|\Phi(z)\|\leq (1+c)\|z-z^*\|.
	\end{align*}
	Also, from  Part \eqref{pdtr1}, we infer that 
	\begin{align*} (1-c)\|z-z^*\|^2\leq \langle z-z^*, \Phi(z) \rangle \leq \|z-z^*\|\|\Phi(z)\| 
	\end{align*} which implies that 
	$$(1-c)\|z-z^*\|\leq \|\Phi(z)\|\quad  \forall z\neq z^*.$$
	Thus, Part \eqref{petr1} is proved.   Part \eqref{pftr1} follows from Part \eqref{petr1}. Hence Proposition is completely proved.  
\end{proof}

The next result presents conditions for the existence and uniqueness of zero points of the sum of two generalized monotone operators. 


\begin{theorem}\label{rem2} 
	Consider an $\eta_F$-monotone set-valued mapping $F : \mathscr{H} \to 2^\mathscr{H}$ and a single-valued operator $G : \mathscr{H} \to \mathscr{H} $, along with a positive parameter $\gamma$ such that $1 + \gamma \eta_F > 0$. Assume further that Assumption {\bf (A)}  holds.     Then the  inclusion problem \eqref{inc1} admits a unique solution.  
\end{theorem}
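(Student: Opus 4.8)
The plan is to exploit Lemma~\ref{tr1}, which under Assumption~\textbf{(A)} gives a constant $c\in(0,1)$ such that the operator $B:=J_{\gamma F}\circ(Id-\gamma G)$ is $c$-Lipschitz continuous on all of $\mathscr{H}$. The key observation is that existence of a zero of $F+G$ is, by Lemma~\ref{lem fixed point}, equivalent to existence of a fixed point of $B$, and uniqueness of the zero is equivalent to uniqueness of that fixed point. So the entire statement reduces to showing that the self-map $B$ of $\mathscr{H}$ has exactly one fixed point.

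First I would note that Lemma~\ref{GENMON}, together with the hypothesis $1+\gamma\eta_F>0$ and the maximality of $F$, guarantees that $J_{\gamma F}$ is single-valued with full domain $\mathscr{H}$; hence $B=J_{\gamma F}\circ(Id-\gamma G)$ is a well-defined single-valued self-map of $\mathscr{H}$. Since $\mathscr{H}$ is a (nonempty) complete metric space — indeed, the paper works in a finite-dimensional Hilbert space — and $B$ is a strict contraction with modulus $c<1$ by Lemma~\ref{tr1}\eqref{patr1}, the Banach fixed-point theorem applies directly: $B$ possesses a fixed point $z^*$, and it is the unique one. (In fact parts \eqref{pbtr1} and \eqref{pctr1} of Lemma~\ref{tr1} already record exactly this conclusion, so one may simply invoke them.)

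To finish, I would translate back through Lemma~\ref{lem fixed point}: the unique $z^*$ with $B(z^*)=z^*$, i.e. $J_{\gamma F}(z^*-\gamma G(z^*))=z^*$, is precisely the unique solution of $0\in F(z^*)+G(z^*)$. Conversely any solution of \eqref{inc1} is a fixed point of $B$, hence equals $z^*$ by uniqueness. This establishes both existence and uniqueness.

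There is essentially no substantial obstacle here: all the analytic work — verifying that Assumption~\textbf{(A)} forces $B$ to be a contraction, and controlling the interplay between $\eta_F$, $\eta_G$, $L$, and $\gamma$ — has already been carried out in Lemma~\ref{tr1} (and in \cite{NHVF}). The only point requiring a moment's care is to make sure the hypotheses of Lemma~\ref{GENMON} and Lemma~\ref{lem fixed point} are all in force, in particular that $1+\gamma\eta_F>0$ (which is part of \textbf{(A)}) so that the resolvent is single-valued and the fixed-point reformulation is valid; once that is checked, the argument is an immediate application of Banach's contraction principle.
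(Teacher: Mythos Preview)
Your proposal is correct and follows essentially the same route as the paper: invoke Lemma~\ref{tr1}\eqref{pbtr1} to obtain the unique fixed point of $B=J_{\gamma F}\circ(Id-\gamma G)$, then apply Lemma~\ref{lem fixed point} to conclude existence and uniqueness for \eqref{inc1}. The paper's proof is just the two-line version of what you wrote, without spelling out the well-definedness of $B$ or the Banach contraction argument behind part~\eqref{pbtr1}.
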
 
\begin{proof}
	By Part \eqref{pbtr1} of Lemma \ref{tr1}, the operator $B=J_{\gamma F}\circ (Id-\gamma G)$ has a unique fixed point.  It follows from Lemma \ref{lem fixed point} that the inclusion problem \eqref{inc1} has a unique solution. 
\end{proof}
\begin{remark}
	If parameter $\eta_F, \eta_G, L, \gamma$ satisfy one of the following: 
	\begin{enumerate}[(a)]
		\item $\eta_F+\eta_G>0, $
		\item  $\eta_F+\eta_G=0$ and  $\eta_F^2>L^2$,
		\item $\eta_F+\eta_G<0 $ and $\eta_F>L$.
	\end{enumerate}   
	Then Assumption {\bf (A)} holds and therefore, solutions of the inclusion problem \eqref{inc1} exist and are uniquely determined. 
\end{remark}

	\subsection{Definitions of stability}
	
	To conclude this section, we recall several stability notions associated with  	the differential equation
	\begin{equation}\label{JJS1}
		\dot{z}(t)=\Phi(z(t)),\quad t\ge 0,
	\end{equation}
	where $T:\mathscr{H}\to \mathscr{H}$ is continuous. Recall that a point $z^*\in \mathscr{H}$ is an equilibrium point of \eqref{JJS1} if 
	$T(z^*)=0$. It is called \emph{Lyapunov stable} if for every $\varepsilon>0$ 
	there exists $r>0$ such that any solution with $z(0)\in B(z^*,r)$ remains in 
	$B(z^*,\varepsilon)$ for all $t>0$.

	\begin{definition}
		An equilibrium point $z^*$ is said to be
		\begin{enumerate}[(a)]
			\item \emph{finite-time stable} if it is Lyapunov stable and the corresponding settling time is finite;
			
			\item \emph{globally finite-time stable} if the above property holds for all initial conditions in $ \mathscr{H}$;
			
			\item \emph{globally fixed-time stable} if it is globally finite-time stable and the settling time is uniformly bounded for all initial conditions in $ \mathscr{H}$;
			
			\item \emph{predefined-time stable} if it is globally fixed-time stable and the system parameters can be adjusted such that the settling time can be prescribed in advance.
		\end{enumerate}
	\end{definition}

	Regarding fixed-time stability, Polyakov \cite{polyakov} proposed a Lyapunov-based 
	sufficient condition ensuring that the convergence time is uniformly bounded with 
	respect to the initial conditions. We recall this result below for completeness.
	
	\begin{theorem}\label{lm1}
		(Lyapunov condition for fixed-time stability).
		An equilibrium point $z^*$ of \eqref{JJS1} is fixed-time stable if there exists a 
		radially unbounded, continuously differentiable function 
		$\Xi: \mathscr{H}\to\mathbb{R}$ such that
		\[
		\Xi(z^*)=0,
		\qquad
		\Xi(z)>0,
		\]
		for all $z\in  \mathscr{H}\setminus\{z^*\}$, and
		\begin{equation}\label{inq of Lyapunov}
			\dot{\Xi}(z)
			\le
			-\big(M_1\Xi(z)^{k_1}+M_2\Xi(z)^{k_2}\big),
		\end{equation}
		for all $z\in  \mathscr{H}\setminus\{z^*\}$, where $M_1,M_2,k_1,k_2$ are positive  with 
		$k_1<1$ and $k_2>1$. 
		In this case, the settling time satisfies
		\[
		T(z(0))
		\le
		\frac{1}{M_1(1-k_1)}+\frac{1}{M_2(k_2-1)},
		\qquad
		\forall z(0)\in \mathscr{H}.
		\]
		In addition, by choosing 
		$k_1=1-\frac{1}{2\alpha}$ and $k_2=1+\frac{1}{2\alpha}$ with $\alpha>1$ in 
		\eqref{inq of Lyapunov}, the settling time can be explicitly bounded by
		\[
		T(z(0))\le T_{\max}
		=
		\frac{\pi\alpha}{\sqrt{M_1M_2}}.
		\]
	\end{theorem}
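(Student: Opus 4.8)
The plan is to establish the settling-time bound by reducing the multivariable differential inequality \eqref{inq of Lyapunov} to a scalar comparison problem along trajectories, and then integrating. First I would fix an arbitrary initial condition $z(0)\in\mathscr{H}$ with $z(0)\neq z^*$ and let $z(\cdot)$ be the corresponding solution of \eqref{JJS1}. Define the scalar function $v(t):=\Xi(z(t))$, which is continuously differentiable in $t$ by the chain rule, and observe that \eqref{inq of Lyapunov} gives $\dot v(t)\le -(M_1 v(t)^{k_1}+M_2 v(t)^{k_2})$ as long as $v(t)>0$. Since the right-hand side is strictly negative when $v>0$, $v$ is strictly decreasing, so it reaches $0$ in finite time (or stays positive for all $t$, a case I will rule out below); let $T(z(0))$ be the first time $v$ hits $0$. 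Lyapunov stability follows from positive-definiteness and radial unboundedness of $\Xi$ together with the fact that $\dot\Xi\le 0$, via the standard argument.

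For the quantitative bound, the key step is a separation-of-variables estimate. On the interval where $v(t)>0$ I would write
\[
-\frac{\dot v(t)}{M_1 v(t)^{k_1}+M_2 v(t)^{k_2}}\ge 1,
\]
and integrate from $0$ to $T(z(0))$ to obtain
\[
T(z(0))\le \int_0^{v(0)}\frac{ds}{M_1 s^{k_1}+M_2 s^{k_2}}.
\]
Then I would split this integral at $s=1$ and bound each piece by dropping one of the two terms in the denominator: for $s\in(0,1]$ use $M_1 s^{k_1}+M_2 s^{k_2}\ge M_1 s^{k_1}$, giving $\int_0^1 M_1^{-1}s^{-k_1}\,ds=\frac{1}{M_1(1-k_1)}$ (finite since $k_1<1$); for $s\in[1,\infty)$ use $M_1 s^{k_1}+M_2 s^{k_2}\ge M_2 s^{k_2}$, giving $\int_1^\infty M_2^{-1}s^{-k_2}\,ds=\frac{1}{M_2(k_2-1)}$ (finite since $k_2>1$). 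This yields the uniform bound $T(z(0))\le \frac{1}{M_1(1-k_1)}+\frac{1}{M_2(k_2-1)}$, independent of $z(0)$; in particular the finiteness of this bound also rules out the possibility that $v$ stays positive forever, completing the fixed-time stability claim.

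For the explicit $T_{\max}$ formula, substitute $k_1=1-\frac1{2\alpha}$ and $k_2=1+\frac1{2\alpha}$ into the integral $\int_0^\infty\frac{ds}{M_1 s^{k_1}+M_2 s^{k_2}}$. The substitution $s=(M_1/M_2)^{\alpha}\,u^{2\alpha}$ (chosen so that $M_1 s^{k_1}$ and $M_2 s^{k_2}$ become proportional to a common factor) turns the integrand into a constant multiple of $\frac{du}{1+u^2}$ over $u\in(0,\infty)$, whose integral is $\pi/2$; tracking the constants gives exactly $\frac{\pi\alpha}{\sqrt{M_1 M_2}}$. I would present this change of variables and the resulting arctangent evaluation without grinding through every algebraic simplification. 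The main obstacle is purely bookkeeping: verifying that the substitution cleanly produces the standard arctangent integral and that the exponents $2\alpha$, $\alpha$, and the square-root of $M_1M_2$ fall out with the correct powers; there is no conceptual difficulty once the scalar comparison inequality is in place.
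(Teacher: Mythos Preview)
Your argument is correct and is essentially the classical proof of Polyakov's fixed-time stability criterion. Note, however, that the paper does \emph{not} supply its own proof of this theorem: it is stated in the Preliminaries section as a result recalled from \cite{polyakov} (``We recall this result below for completeness''), with no accompanying proof. So there is nothing in the paper to compare against beyond the citation.

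For what it is worth, your outline matches the standard derivation in the literature: the scalar comparison $\dot v\le -(M_1 v^{k_1}+M_2 v^{k_2})$, the separation-of-variables bound $T\le\int_0^\infty (M_1 s^{k_1}+M_2 s^{k_2})^{-1}\,ds$, the split at $s=1$ to obtain the first estimate, and the substitution reducing the symmetric-exponent case to $\int_0^\infty(1+u^2)^{-1}\,du=\pi/2$ to obtain $T_{\max}=\pi\alpha/\sqrt{M_1M_2}$. All steps are sound, and the bookkeeping you flag as the only obstacle does indeed work out cleanly.
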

	
	Concerning predefined-time stability, Zheng \emph{et al.} \cite{Zheng} derived 
	sufficient Lyapunov conditions under which the convergence time can be prescribed 
	in advance. The corresponding result is summarized as follows.
	
	\begin{theorem}\label{lmpredefined}\cite{Zheng}
		Consider the differential equation \eqref{JJS1}. Assume that there exists a positive 
		definite, radially unbounded function 
		$\Xi: \mathscr{H}\to\mathbb{R}$ and a user-specified constant $T_p>0$ such that:
		\begin{itemize}
			\item[(i)] $\Xi(z(t))=0$ if and only if $\dot z(t)=0$;
			\item[(ii)] For any trajectory satisfying $\Xi(z(t))>0$, there exist constants 
			$a_1,a_2,a_3,k_1,k_2,T_p,K_p>0$, with $0<k_1<1$ and $k_2>1$, such that
			\begin{equation}\label{pt9}
				\dot{\Xi}(z(t))
				\le
				-\frac{K_p}{T_p}
				\big(
				a_1\Xi^{k_1}(z(t))
				+a_2\Xi^{k_2}(z(t))
				+a_3\Xi(z(t))
				\big).
			\end{equation}
		\end{itemize}
		Then the equilibrium point of \eqref{JJS1} is fixed-time stable, and convergence is 
		guaranteed within the predefined time $T_p$. Moreover, the constant $K_p$ can be 
		chosen as
		\begin{equation}\label{pt10}
			K_p
			=
			\frac{1}{a_3(1-k_1)}
			\ln\!\left(1+\frac{a_3}{a_1}\right)
			+
			\frac{1}{a_3(k_2-1)}
			\ln\!\left(1+\frac{a_3}{a_2}\right).
		\end{equation}
	\end{theorem}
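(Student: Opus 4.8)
The plan is to collapse the vector differential inequality \eqref{pt9} to a one-dimensional one along trajectories and then integrate it in closed form; this is the same strategy that underlies Polyakov's estimate (Theorem~\ref{lm1}), with the extra linear term $a_3\Xi$ handled by a logarithmic primitive. Set $V(t):=\Xi(z(t))$ and $g(s):=a_1 s^{k_1}+a_2 s^{k_2}+a_3 s$ for $s\ge 0$, so that hypothesis (ii) becomes $\dot V(t)\le -\tfrac{K_p}{T_p}\,g(V(t))$ whenever $V(t)>0$. Since $\Xi$ is positive definite and radially unbounded, $V(0)$ is finite for every initial state, the sublevel sets of $\Xi$ are bounded, and the standard Lyapunov argument (here $\dot V<0$ off the equilibrium) already yields Lyapunov stability and global existence of the trajectory; it then remains to produce a bound $T_p$ on the settling time that does not depend on $z(0)$.

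Because $0<k_1<1$, the map $s\mapsto 1/g(s)$ is integrable at the origin, so
\[
W(t):=\int_0^{V(t)}\frac{ds}{g(s)}
\]
is well defined, finite, nonnegative, and a strictly increasing function of $V(t)$ that vanishes exactly when $V(t)=0$. I would differentiate $W$ along the trajectory to get $\dot W(t)=\dot V(t)/g(V(t))\le -K_p/T_p$ on the interval where $V(t)>0$, and then integrate from $0$ to obtain $W(t)\le W(0)-\tfrac{K_p}{T_p}t$ there.

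The decisive step is the uniform bound $W(0)\le K_p$. Since $W(0)\le\int_0^\infty\frac{ds}{g(s)}$, it suffices to estimate this improper integral, which I would do by splitting it at $s=1$: on $(0,1)$ discard $a_2 s^{k_2}\ge 0$ and substitute $u=s^{1-k_1}$, giving $\int_0^1\frac{ds}{g(s)}\le\tfrac{1}{a_3(1-k_1)}\ln\!\bigl(1+\tfrac{a_3}{a_1}\bigr)$; on $(1,\infty)$ discard $a_1 s^{k_1}\ge 0$ and substitute $u=s^{1-k_2}$, giving $\int_1^\infty\frac{ds}{g(s)}\le\tfrac{1}{a_3(k_2-1)}\ln\!\bigl(1+\tfrac{a_3}{a_2}\bigr)$. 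Their sum is exactly the constant $K_p$ in \eqref{pt10}, so $W(0)\le K_p$ and hence $W(t)\le K_p\bigl(1-t/T_p\bigr)$ as long as $V(t)>0$. As $W\ge 0$, this forces $W(t)=0$, i.e.\ $V(t)=\Xi(z(t))=0$, no later than $t=T_p$; by hypothesis (i) this means $\dot z(t)=0$, so the trajectory has reached the equilibrium and remains there, giving $T(z(0))\le T_p$ for every $z(0)$ and thus fixed-time stability with predefined settling time $T_p$.

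The only genuinely delicate point is the behaviour as $V(t)\downarrow 0$: one has to check that $W$ stays absolutely continuous up to (and past) the first time at which $z(t)$ reaches the equilibrium, so that the integrated inequality $W(t)\le W(0)-\tfrac{K_p}{T_p}t$ remains valid all the way to $t=T_p$, and that $z(t)$ cannot leave the equilibrium once it arrives there. Both follow from the integrability of $1/g$ near $0$ — which is precisely where the sublinear exponent $k_1<1$ enters — together with the equivalence $\Xi(z(t))=0\iff\dot z(t)=0$ from (i); everything else is a routine verification.
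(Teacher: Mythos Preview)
The paper does not actually prove this theorem: it is stated in the Preliminaries as a result quoted from \cite{Zheng}, with no proof given. So there is no ``paper's own proof'' to compare against.

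That said, your argument is correct and is precisely the standard route for such predefined-time Lyapunov results. The comparison principle reduces the problem to the scalar inequality $\dot V\le -\tfrac{K_p}{T_p}g(V)$, and the settling-time bound is the total mass $\int_0^\infty ds/g(s)$ of the reciprocal. Your split at $s=1$ with the substitutions $u=s^{1-k_1}$ on $(0,1)$ and $u=s^{1-k_2}$ on $(1,\infty)$ gives exactly the two logarithmic terms in \eqref{pt10}, and both integrals are evaluated correctly. The only remark is cosmetic: the inequality $W(t)\le W(0)-\tfrac{K_p}{T_p}t$ already forces $V$ to hit zero at some time $\le \tfrac{T_p}{K_p}W(0)\le T_p$, so you do not need to carry the differential inequality past the hitting time; invoking (i) then pins the trajectory at the equilibrium. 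Otherwise the proof is complete.
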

	
	\section{ Predefined-time stability  analysis} 	\label{sec3}
In this section, we focus on the fixed-time stability analysis of solutions to the inclusion problem \eqref{inc1}. 
To this end, we introduce a novel differential equation whose equilibrium points are shown to characterize the solutions of the generalized inverse mixed variational inequality problem. 
We then establish the fixed-time convergence of the proposed differential equation.

	We first consider the following differential equation, which was studied in \cite{NHVF} and referred to as a nominal differential equation:
	
	\begin{equation} \label{hdl2}
		\dot{z}=-\mu \Big(z-J_{\gamma F}(z-\gamma G(z))\Big), \quad \mu>0.
	\end{equation}
	
	For convenience of notation, we define $$\Phi(z):=z-J_{\gamma F}(z-\gamma G(z)).$$
	
	Then the system \eqref{hdl2} becomes 
	\begin{equation} \label{hdl2n}
		\dot{z}=-\mu \Phi(z), \quad \mu>0.
	\end{equation}


Proposition \ref{rem2} identifies the equivalence between the solutions of the inclusion problem \eqref{inc1} and the equilibrium points of the system described in \eqref{hdl2}.
	\begin{proposition}\label{rem2} \cite{NHVF}
	Consider an $\eta_F$-monotone set-valued mapping $F : \mathscr{H} \to 2^\mathscr{H}$ and a single-valued operator $G : \mathscr{H} \to \mathscr{H} $, along with a positive parameter $\gamma$ such that $1 + \gamma \eta_F > 0$. Then,  \(z^*\in \mathscr{H}\) is a zero point of $F+G$  if and only if it is an equilibrium point of \eqref{hdl2n}.
	\end{proposition}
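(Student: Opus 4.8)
The plan is to unwind the definition of equilibrium point and match it against the fixed-point characterization already established. First I would note that, since $1+\gamma\eta_F>0$, Lemma \ref{GENMON} guarantees that $J_{\gamma F}$ is single-valued with full domain; hence $\Phi(z)=z-J_{\gamma F}(z-\gamma G(z))$ is a well-defined single-valued operator on $\mathscr{H}$ and the right-hand side of \eqref{hdl2n} is meaningful at every point. By definition, $z^{*}$ is an equilibrium point of \eqref{hdl2n} precisely when $-\mu\Phi(z^{*})=0$, and since $\mu>0$ this is equivalent to $\Phi(z^{*})=0$, i.e. to $z^{*}=J_{\gamma F}\big(z^{*}-\gamma G(z^{*})\big)$.

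Second, I would simply invoke Lemma \ref{lem fixed point}: under the standing hypothesis $1+\gamma\eta_F>0$, the identity $J_{\gamma F}\big(z^{*}-\gamma G(z^{*})\big)=z^{*}$ holds if and only if $0\in F(z^{*})+G(z^{*})$. Concatenating the two equivalences yields the claim. To keep the argument self-contained one may also verify the equivalence $\Phi(z^{*})=0 \iff 0\in F(z^{*})+G(z^{*})$ directly: using $J_{\gamma F}=(Id+\gamma F)^{-1}$, the equation $z^{*}=J_{\gamma F}\big(z^{*}-\gamma G(z^{*})\big)$ rewrites as $z^{*}-\gamma G(z^{*})\in (Id+\gamma F)(z^{*})=z^{*}+\gamma F(z^{*})$, hence $-\gamma G(z^{*})\in\gamma F(z^{*})$, and dividing by $\gamma>0$ gives $-G(z^{*})\in F(z^{*})$, that is $0\in F(z^{*})+G(z^{*})$; every step is reversible.

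There is essentially no obstacle: the statement is an immediate consequence of Lemma \ref{lem fixed point} together with $\mu\neq 0$. The only point deserving a word of care is that rewriting the resolvent equation as an inclusion relies on $(Id+\gamma F)$ being invertible as a relation, which is exactly the single-valuedness of $J_{\gamma F}$ furnished by Lemma \ref{GENMON}; without the assumption $1+\gamma\eta_F>0$ the resolvent may be multivalued and the equivalence can fail.
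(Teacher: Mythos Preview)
Your argument is correct and is exactly the natural reduction: $z^*$ is an equilibrium of \eqref{hdl2n} iff $\Phi(z^*)=0$ (since $\mu\neq 0$), iff $z^*=J_{\gamma F}(z^*-\gamma G(z^*))$, iff $0\in F(z^*)+G(z^*)$ by Lemma~\ref{lem fixed point}. The paper does not give its own proof here---it simply cites \cite{NHVF}---so there is nothing to compare beyond noting that your approach is the standard one and coincides with the argument behind Lemma~\ref{lem fixed point}.

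One small caveat: you invoke Lemma~\ref{GENMON} to claim that $J_{\gamma F}$ has \emph{full domain}, but part~(3) of that lemma requires $F$ to be \emph{maximal} $\eta_F$-monotone, which is not assumed in the proposition. This does not harm your equivalence, since all you actually need is that $J_{\gamma F}$ is single-valued at $z^*-\gamma G(z^*)$, and that follows from part~(1) of Lemma~\ref{GENMON} alone under the hypothesis $1+\gamma\eta_F>0$.
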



	
With the aim of investigating fixed-time stability for the inclusion problem  \eqref{inc1}, we introduce the following differential equation:

	\begin{equation}\label{newydynamicalsystem} 
		\dot{z}=-\frac{K_p}{T_p}\omega(z)\Phi(z),
	\end{equation} 
	where 
	\begin{equation}\label{dnrho} 
		\omega(z):=\begin{cases} 
			b_1\dfrac{1}{\|\Phi(z)\|^{1-p_1}}& +b_2\dfrac{1}{\|\Phi(z)\|^{1-p_2}}+b_3\dfrac{1}{\|\Phi(z)\|^{p_3}} \\
			& \mbox{ if } w\in   \mathscr{H}\setminus {\rm Zer}(\Phi),\\
			0 & \mbox{otherwise}, 
		\end{cases}
	\end{equation} 
	with  \(b_1, b_2>0, b_3> 0, p_1\in (0, 1)\), $p_3\geq 0$ and \(p_2>1\).
	\begin{remark}
		When \(b_3=0\), the system \eqref{newydynamicalsystem} is the one studied in \cite{NHVF}. In addition, if $b_2=0$, the system \eqref{newydynamicalsystem} reduces to the one examined in \cite{NH} for finite stability of a generalized monotone inclustion problem. 
		
	\end{remark}
	We have the following relationshi between an equilibrium points of \eqref{newydynamicalsystem}, \eqref{hdl2n} and the solution to the inclusion problem \eqref{inc1}.
	\begin{remark}\label{rmsol}
		A point $ z^*\in  H$ is an equilibrium point of \eqref{newydynamicalsystem}  $\iff$  it is also an equilibrium point of \eqref{hdl2n} $\iff$ it  is a zero point of $F+G$.
	\end{remark}

	\begin{proposition}\label{unique sol da sy}
		Consider the differential equation \eqref{newydynamicalsystem}. 
		Assume that the mapping $\Phi: \mathscr{H}\to \mathscr{H}$ is locally Lipschitz 
		continuous and satisfies
		\begin{equation*}
			\Phi(z^*)=0
			\quad\text{and}\quad
			\langle z-z^*,\Phi(z)\rangle>0,
			\quad
			\forall z\in \mathscr{H}\setminus\{z^*\},
		\end{equation*}
		for some $z^*\in \mathscr{H}$. 
		Then the right-hand side of \eqref{newydynamicalsystem} is continuous on 
		$ \mathscr{H}$. Moreover, for any initial condition, the differential equation 
		\eqref{newydynamicalsystem} admits a unique solution defined for all 
		$t\geq 0$.
	\end{proposition}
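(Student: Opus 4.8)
The plan is to establish the two assertions—continuity of the right-hand side, and global existence and uniqueness—in sequence, exploiting the specific algebraic structure of the scaling factor $\omega$ together with the coercivity hypothesis $\langle z-z^*,\Phi(z)\rangle>0$.

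First I would verify that the vector field $V(z):=-\tfrac{K_p}{T_p}\,\omega(z)\Phi(z)$ is continuous on all of $\mathscr{H}$. Away from the zero set of $\Phi$ this is immediate, since $\Phi$ is locally Lipschitz (hence continuous) and $\omega$ is a positive combination of powers of $\|\Phi(z)\|$ with nonvanishing denominators there. The only issue is at a point $w$ with $\Phi(w)=0$, i.e.\ at $w=z^*$ (uniqueness of such a point follows from the coercivity assumption, but we only need that $\Phi(z)=0\Rightarrow z=z^*$ when analyzing the limit, or we can argue directly at an arbitrary zero). I would show $V(z)\to 0$ as $z\to z^*$. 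The key estimate is that each summand of $\omega(z)\Phi(z)$ has the form $b_i\,\Phi(z)/\|\Phi(z)\|^{\alpha_i}$, whose norm equals $b_i\|\Phi(z)\|^{1-\alpha_i}$; for the first two terms $\alpha_i=1-p_i$ so the exponent is $p_1>0$ or $p_2-1>0$, and for the third $\alpha_3=p_3\geq 0$ so the exponent is $1-p_3$, which may fail to be positive. This is the one genuinely delicate point: if $p_3>1$ the term $b_3\Phi(z)/\|\Phi(z)\|^{p_3}$ need not vanish—or even stay bounded—as $\Phi(z)\to 0$. I expect that the intended reading is $p_3\in[0,1)$ (or $p_3\le 1$), consistent with the roles of the exponents in Theorems~\ref{lm1} and~\ref{lmpredefined}, and under that restriction $\|V(z)\|\le \tfrac{K_p}{T_p}(b_1\|\Phi(z)\|^{p_1}+b_2\|\Phi(z)\|^{p_2-1}+b_3\|\Phi(z)\|^{1-p_3})\to 0=V(z^*)$, giving continuity. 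This handles the first claim.

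Next, local existence of a solution through every initial point follows from Peano's theorem applied to the now-continuous field $V$. For uniqueness and global existence I would use the candidate Lyapunov function $\Xi(z)=\tfrac12\|z-z^*\|^2$. Along any solution, $\dot\Xi = \langle z-z^*,\dot z\rangle = -\tfrac{K_p}{T_p}\,\omega(z)\,\langle z-z^*,\Phi(z)\rangle \le 0$, using $\omega\ge0$ and the coercivity hypothesis; in fact it is strictly negative off $z^*$. Hence $\|z(t)-z^*\|$ is nonincreasing, so any solution stays in the closed ball of radius $\|z(0)-z^*\|$, which is compact in the finite-dimensional space $\mathscr{H}$; this a priori bound rules out finite-time blow-up and yields existence on $[0,\infty)$. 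For uniqueness, note that $V$ is locally Lipschitz on $\mathscr{H}\setminus\{z^*\}$, so two solutions sharing an initial condition $z(0)\ne z^*$ coincide as long as they avoid $z^*$; and once coincidence is forced, standard continuation gives global uniqueness. The remaining case is $z(0)=z^*$: the constant function $z\equiv z^*$ is a solution, and I would argue it is the only one by observing that $\dot\Xi\le 0$ with $\Xi(0)=0$ and $\Xi\ge0$ forces $\Xi\equiv0$.

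The main obstacle is the behavior of the third term in $\omega$ near the equilibrium: establishing that $V$ is continuous (indeed that $V(z)\to0$ as $z\to z^*$) requires controlling $b_3\Phi(z)/\|\Phi(z)\|^{p_3}$, and this is exactly where the hypothesis $p_3\ge 0$ must be supplemented—explicitly or implicitly—by $p_3\le 1$ (equivalently, $1-p_3\ge 0$) so that the exponent $1-p_3$ is nonnegative. Assuming that (as the structure of \eqref{newydynamicalsystem} and its role in the fixed-time estimates clearly intends), everything else is routine: Peano for local existence, the Lyapunov inequality $\dot\Xi\le0$ for the a priori bound and hence global existence, and local Lipschitzness of $V$ off $z^*$ together with the monotone decrease of $\Xi$ for uniqueness.
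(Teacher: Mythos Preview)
Your argument is sound and considerably more informative than the paper's own proof, which consists of a single sentence deferring to Lemma~3 of \cite{Zheng}. The structure you outline---continuity of the vector field via the explicit norm estimate on each summand of $\omega(z)\Phi(z)$, Peano for local existence, the Lyapunov decrease $\dot\Xi\le 0$ for the a~priori bound and hence global existence, and local Lipschitzness away from $z^*$ combined with the trapping argument at $z^*$ for uniqueness---is precisely the standard route one would expect behind such a citation, so in substance you are supplying what the paper omits.

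Your identification of the obstruction from the third summand is well taken and in fact exposes a genuine tension in the paper: the definition of $\omega$ allows $p_3\ge 0$ with no upper bound, and the numerical section even uses $p_3=1.2$, yet continuity of the right-hand side at $z^*$ requires the norm of $b_3\Phi(z)/\|\Phi(z)\|^{p_3}$, namely $b_3\|\Phi(z)\|^{1-p_3}$, to tend to zero, which forces $p_3<1$ (at $p_3=1$ the term has constant norm $b_3$ and no directional limit). The cited result in \cite{Zheng} treats only the case $p_3=0$, so the extension to general $p_3$ is not actually covered by that reference. Your proposal is therefore more careful than the paper on this point; just sharpen your side condition from $p_3\le 1$ to $p_3<1$.
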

	
	\begin{proof}
		The result follows by applying the same line of reasoning as in the proof of 
		Lemma~3 in \cite{Zheng}, which guarantees existence and uniqueness of solutions 
		under the stated regularity and monotonicity conditions.
	\end{proof}

	We are now ready to present the main result of this section.
	
	\begin{theorem}\label{tr2}
		Consider an $\eta_F$-monotone set-valued mapping $F : \mathscr{H} \to 2^\mathscr{H}$ and a single-valued operator $G : \mathscr{H} \to \mathscr{H} $, along with a positive parameter $\gamma$ such that  Assumption {\bf (A)} is satisfied. 
		Assume that $z^*\in \mathscr{H}$ is the unique equilibrium point of the differential equation 
		\eqref{newydynamicalsystem}. 
		Then the point $z^*\in \mathscr{H}$, which solves the inclusion problem \eqref{inc1}, is a 
		fixed-time stable equilibrium of \eqref{newydynamicalsystem} for any 
		$p_1\in(0,1)$, $p_2>1$, and $p_3>0$. Moreover, the settling time admits the estimate
		\[
		T(z(0))\leq T_{\max}
		=\frac{1}{M_1(1-r_1)}+\frac{1}{M_2(r_2-1)},
		\]
		for some constants $M_1>0$, $M_2>0$, $r_1\in(0.5,1)$, and $r_2>1$.
		
		Furthermore, by choosing
		$r_1=1-\frac{1}{2\zeta}$ and $r_2=1+\frac{1}{2\zeta}$ with $\zeta>1$, the above bound 
		can be refined to
		\[
		T(z(0))\leq T_{\max}
		=\frac{\pi\zeta}{\sqrt{b_1 b_2}},
		\]
		for some constants $b_1>0$, $b_2>0$, and $\zeta>1$.
		
		In particular, when $p_3=0$, system \eqref{newydynamicalsystem} reaches the 
		equilibrium point within a prescribed time $T_p$, where $T_p$ is a user-defined 
		parameter. In this case, the control gain $K_p$ is given by
		\begin{align} \label{pretime}
			K_p=& 
			\frac{1}{b_3(1-c)(1-p_1)}
			\ln\!\left(
			1+\frac{b_3 2^{\frac{1-p_1}{2}}(1+c)^{1-p_1}}{b_1}
			\right) \notag \\ 
			&+
			\frac{1}{b_3(1-c)(p_2-1)}
			\ln\!\left(
			1+\frac{b_3 2^{\frac{1-p_2}{2}}(1-c)^{1-p_2}}{b_2}
			\right).
		\end{align} 
	\end{theorem}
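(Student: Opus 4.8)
The plan is to use the standard quadratic Lyapunov function $\Xi(z)=\tfrac12\|z-z^*\|^2$, which is $C^1$, positive definite and radially unbounded, and to reduce everything to the scalar differential inequalities of Theorems~\ref{lm1} and \ref{lmpredefined}. First I would record that \eqref{newydynamicalsystem} is well posed: since $G$ is $L$-Lipschitz and $J_{\gamma F}$ is $(1+\gamma\eta_F)$-cocoercive by Lemma~\ref{GENMON}, the residual map $\Phi=Id-J_{\gamma F}(Id-\gamma G)$ is (globally) Lipschitz, and by part~\eqref{pdtr1} of Lemma~\ref{tr1} it satisfies $\langle z-z^*,\Phi(z)\rangle>0$ for all $z\neq z^*$; hence Proposition~\ref{unique sol da sy} applies and \eqref{newydynamicalsystem} admits a unique solution on $[0,\infty)$ for every initial condition. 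By Remark~\ref{rmsol}, $z^*$ is simultaneously the unique equilibrium of \eqref{newydynamicalsystem} and the unique solution of \eqref{inc1}, so it remains only to exhibit the settling-time estimates.

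The core computation differentiates $\Xi$ along a trajectory:
\[
\dot\Xi(z)=\langle z-z^*,\dot z\rangle=-\frac{K_p}{T_p}\,\omega(z)\,\langle z-z^*,\Phi(z)\rangle .
\]
Combining $\langle z-z^*,\Phi(z)\rangle\ge(1-c)\|z-z^*\|^2=2(1-c)\Xi(z)$ from part~\eqref{pdtr1} of Lemma~\ref{tr1} with the two-sided bound $(1-c)\|z-z^*\|\le\|\Phi(z)\|\le(1+c)\|z-z^*\|$ from part~\eqref{petr1} of Lemma~\ref{tr1}, I would estimate the three summands of $\omega(z)\langle z-z^*,\Phi(z)\rangle$ from below one at a time, using the \emph{upper} bound on $\|\Phi(z)\|$ for the summands with non-positive exponent ($b_1\|\Phi(z)\|^{p_1-1}$ and $b_3\|\Phi(z)\|^{-p_3}$) and the \emph{lower} bound for the summand with positive exponent ($b_2\|\Phi(z)\|^{p_2-1}$). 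Rewriting $\|z-z^*\|^{\theta}=(2\Xi(z))^{\theta/2}$, this yields
\[
\dot\Xi(z)\le-\frac{K_p}{T_p}\Big(a_1\Xi(z)^{k_1}+a_2\Xi(z)^{k_2}+a_3\Xi(z)^{1-p_3/2}\Big),
\]
with $k_1=\tfrac{1+p_1}{2}\in(\tfrac12,1)$, $k_2=\tfrac{1+p_2}{2}>1$, $a_1=b_1(1-c)(1+c)^{p_1-1}2^{(1+p_1)/2}$, $a_2=b_2(1-c)^{p_2}2^{(1+p_2)/2}$ and $a_3=b_3(1-c)(1+c)^{-p_3}2^{(2-p_3)/2}$ (so $a_3=2b_3(1-c)$ when $p_3=0$).

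For the fixed-time statements with $p_3>0$ I would drop the non-negative last term and invoke Polyakov's Theorem~\ref{lm1} with $M_1=\tfrac{K_p}{T_p}a_1$, $M_2=\tfrac{K_p}{T_p}a_2$, $r_1=k_1\in(0.5,1)$ and $r_2=k_2>1$, which gives the first settling-time estimate; specializing $p_1=1-\tfrac1\zeta$ and $p_2=1+\tfrac1\zeta$ (so $r_1=1-\tfrac1{2\zeta}$, $r_2=1+\tfrac1{2\zeta}$, $\zeta>1$) and choosing the gain so that $M_1M_2=b_1b_2$ turns the second part of Theorem~\ref{lm1} into $T_{\max}=\pi\zeta/\sqrt{b_1b_2}$. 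For the predefined-time statement I set $p_3=0$: the third term above is then exactly $a_3\Xi(z)$ with $a_3=2b_3(1-c)$, so the inequality is precisely hypothesis (ii) of Theorem~\ref{lmpredefined}, while hypothesis (i) holds because $\Xi(z(t))=0\iff z(t)=z^*\iff\dot z(t)=0$ by Remark~\ref{rmsol}. Theorem~\ref{lmpredefined} then guarantees convergence within the prescribed time $T_p$ with $K_p$ given by \eqref{pt10}; substituting $k_1,k_2,a_1,a_2,a_3$ and simplifying via $a_3(1-k_1)=b_3(1-c)(1-p_1)$, $a_3(k_2-1)=b_3(1-c)(p_2-1)$, $a_3/a_1=b_3\,2^{(1-p_1)/2}(1+c)^{1-p_1}/b_1$ and $a_3/a_2=b_3\,2^{(1-p_2)/2}(1-c)^{1-p_2}/b_2$ collapses \eqref{pt10} to the advertised expression \eqref{pretime}.

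I expect the main obstacle to be the bookkeeping in these term-by-term lower bounds: getting each exponent and each sign-dependent choice of a $(1\pm c)$ factor exactly right so that the resulting constants $a_1,a_2,a_3$ feed cleanly into \eqref{pt10} and reproduce \eqref{pretime} verbatim. A secondary, more cosmetic point is pinning down the normalization of the gain $K_p/T_p$ needed to pass from Polyakov's generic bound $\pi\alpha/\sqrt{M_1M_2}$ to the stated $\pi\zeta/\sqrt{b_1b_2}$.
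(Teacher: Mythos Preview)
Your proposal is correct and follows essentially the same route as the paper: the same quadratic Lyapunov function $\Xi(z)=\tfrac12\|z-z^*\|^2$, the same use of parts~\eqref{pdtr1} and~\eqref{petr1} of Lemma~\ref{tr1} to turn $\omega(z)\langle z-z^*,\Phi(z)\rangle$ into a sum of powers of $\Xi$, followed by Theorem~\ref{lm1} for the fixed-time bound (after discarding the $b_3$-term) and Theorem~\ref{lmpredefined} for the $p_3=0$ case. Your term-by-term treatment of the $b_3$-summand (using the upper bound $\|\Phi(z)\|\le(1+c)\|z-z^*\|$ when $p_3>0$) is in fact a bit cleaner than the paper's intermediate manipulation in \eqref{nam1}, but both arguments end at the same constants $a_1,a_2,a_3$ and the same $K_p$ in \eqref{pretime}.
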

	
	\begin{proof} 
		By Theorem \ref{rem2} the inclusion problem has a unique solution. By Lemma \ref{tr1}, all assumptions in Proposition \ref{unique sol da sy} are satisfied. Hence, the system \eqref{newydynamicalsystem} has a unique solution. Consider the following Lyapunov function\\
		$$\Xi(z)=\frac{1}{2}\left\|z-z^*\right\|^{2}.$$
		Then 
		\begin{align}\label{eq12} 
			\dfrac{d}{dt}\dot{\Xi}(z(t))=&\langle z-z^*, \dot{z}\rangle \notag \\
			= &- \frac{K_p}{T_p}\Big(b_1\dfrac{1}{\|\Phi(z)\|^{1-p_1}} +b_2\dfrac{1}{\|\Phi(z)\|^{1-p_2}}+b_3\dfrac{1}{\|\Phi(z)\|^{p_3}}\Big)  \langle z-z^*, \Phi(z)\rangle \notag \\
			\leq &-\frac{K_p}{T_p} \left( b_1\left(1-c \right) \dfrac{\|z-z^*\|^2}{\|\Phi(z)\|^{1-p_1}}-b_2\big(1-c\big) \dfrac{\|z-z^*\|^2}{\|\Phi(z)\|^{1-p_2}}-(1-c)b_3  \dfrac{\|z-z^*\|^2}{\|\Phi(z)\|^{p_3}}  \right)
		\end{align}
		for all $z\in   \mathscr{H}\setminus \{z^*\}$. 
		Using Part \eqref{petr1} of Lemma \ref{tr1}  it holds that 
		\begin{align}\label{nam1}
			\dot{\Xi} \leq &\frac{K_p}{T_p}\left(-\dfrac{b_1(1-c)}{(1+c)^{1-p_1}}\|z-z^*\|^{1+p_1}-(1-c)^{p_2}\Big( b_2+b_3\|\Phi(z)\|^{1-p_2-p_3}\Big) \|z-z^*\|^{1+p_2} \right)\notag\\
			\leq & -C_1(p_1)\|z-z^*\|^{1+p_1}-C_2(p_2, z) \|z-z^*\|^{1+p_2},
		\end{align}
		here \(C_1(p_1)=\frac{K_p}{T_p}\cdot \dfrac{b_1(1-c)}{(1+c)^{1-p_1}}\) and \(C_2(p_2, z)= \frac{K_p}{T_p}(1-c)^{p_2}\Big( b_2+b_3\|\Phi(z)\|^{1-p_2-p_3}\Big)\). 
		Because $1-c>0$ and $b_i>0, i=1,2,3$ it holds that $C_1(p_1)>0, C_2(p_2, z)>0$ for all $0<k_1<1, k_2>1$. By putting $$M_i =2^{\frac{1+p_1}{2}}\frac{K_p}{T_p}\cdot \dfrac{b_1(1-c)}{(1+c)^{1-p_1}}$$ and let 
		$$M_2 = 2^{\frac{1+p_2}{2}} \frac{K_p}{T_p}(1-c)^{p_2}b_2$$
		it follows from \eqref{nam1} that 
		\begin{align}
			\dot{\Xi} \leq &  -\Big(M_1\Xi(z)^{r_1}+M_2\Xi(z)^{r_2}\Big),\label{dgV} 
		\end{align}
		where  \(r_i=\frac{1+p_i}{2}\), $i=1,2$. Observe that \(M_1>0, 0.5<r_1<1\) for all \(p_1\in (0, 1)\) and \(M_2>0, r_2>1\) for all \(p_2>1\). Therefore, the desired conclusion follows from Theorem \ref{lm1}.

		When $p_3=0$, using \eqref{eq12} we get 
		\begin{align}\label{pt23}
			\dot{\Xi}&=\left\langle z-z^*, \dot{z}\right\rangle \notag \\ 
			& =-\frac{K_p}{T_p}\left\langle z-z^*, b_{1} \frac{\Phi(z)}{\|\Phi(z)\|^{1-p_{1}}}+b_{2} \frac{\Phi(z)}{\|\Phi(z)\|^{1-p_{2}}}+b_{3} \Phi(z)\right\rangle  \notag \\
			& \leq \frac{K_p}{T_p}\left(-b_{1} (1-c) \frac{\left\|z-z^*\right\|^{2}}{\|\Phi(z)\|^{1-p_{1}}}-b_{2} (1-c) \frac{\left\|z-z^*\right\|^{2}}{\|\Phi(z)\|^{1-p_{2}}}-b_{3} (1-c)\left\|z-z^*\right\|^{2}\right) \notag \\
			& \leq \frac{K_p}{T_p}\left(-\frac{b_{1} (1-c)}{(1+c)^{1-p_{1}}}\left\|z-z^*\right\|^{1+p_{1}}-b_{2} (1-c) \frac{\left\|z-z^*\right\|^{2}}{\|\Phi(z)\|^{1-p_{2}}}-b_{3} (1-c)\left\|z-z^*\right\|^{2}\right) \notag \\
			& \leq  \frac{K_p}{T_p}\left(-\frac{b_{1} (1-c)}{(1+c)^{1-p_{1}}}\left\|z-z^*\right\|^{1+p_{1}}-b_{2} (1-c)^{p_{2}}\left\|z-z^*\right\|^{1+p_{2}}-b_{3} (1-c)\left\|z-z^*\right\|^{2}\right) \notag \\
			& = \frac{K_p}{T_p}\left(-\frac{b_{1} (1-c)}{(1+c)^{1-p_{1}}}(2 V)^{\frac{1+p_{1}}{2}}-b_{2} (1-c)^{p_{2}}(2 V)^{\frac{1+p_{2}}{2}}-2 b_{3} (1-c) \Xi\right) \notag\\
			& =\frac{K_p}{T_p}\left(-\left(\sqrt{2}^{1+p_{1}} \frac{b_{1} (1-c)}{(1+c)^{1-p_{1}}} \Xi^{\frac{1+p_{1}}{2}}+\sqrt{2}^{1+p_{2}} b_{2} (1-c)^{p_{2}} \Xi^{\frac{1+p_{2}}{2}}+2 b_{3} (1-c)\Xi\right)\right) \notag \\
			& =\frac{K_p}{T_p}\left(-a_{1} \Xi^{\frac{1+p_{1}}{2}}-a_{2} \Xi^{\frac{1+p_{2}}{2}}-a_{3} \Xi\right) 
		\end{align}
		where $a_{1}=\sqrt{2}^{1+p_{1}} \frac{b_{1} (1-c)}{(1+c)^{1-p_{1}}}>0, a_{2}=\sqrt{2}^{1+p_{2}} b_{2} (1-c)^{p_{2}}>0, a_{3}=2 b_{3} (1-c)$.  Let
		\begin{align*}
			K_p= & \frac{2}{a_{3}\left(1-p_{1}\right)} \ln \left(1+\frac{a_{3}}{a_{1}}\right)+\frac{2}{a_{3}\left(p_{2}-1\right)} \ln \left(1+\frac{a_{3}}{a_{2}}\right) \\
			= & \frac{1}{b_{3} (1-c)\left(1-p_{1}\right)} \ln \left(1+\frac{b_{3} \sqrt{2}^{1-p_{1}} (1+c)^{1-p_{1}}}{b_{1}}\right) \\
			& +\frac{1}{b_{3} (1-c)\left(p_{2}-1\right)} \ln \left(1+\frac{b_{3} \sqrt{2}^{1-p_{2}} (1-c)^{1-p_{2}}}{b_{2}}\right).
		\end{align*}
		Then, due to \eqref{pt23} and Lemma~\ref{lmpredefined},  the trajectary $z(t)$ of the differential equation \eqref{newydynamicalsystem} reaches its  equilibrium point   within the predefined time \(T_p\). In addition, for any initial condition \(z(0)\in  \mathscr{H}\), the settling time satisfies
		\[
		T(z(0)) \le T_{\max} = T_p .
		\]
		Thus, we complete the proof. 
	\end{proof}

	\section{ Discretization of the differential equation} \label{sec4}
	This section addresses the construction of a discrete-time scheme associated 
	with the considered inclusion problem. By means of a forward Euler 
	approximation in time, we derive an iterative algorithm and verify that the 
	fixed-time stability property of the continuous system is retained after 
	discretization. For this purpose, we begin with a general framework.
	
	Specifically, we consider the differential inclusion
	\begin{equation}\label{eq20}
		\dot{z} \in \Psi(z),
	\end{equation}
	and introduce the following standing assumption.
	
	\noindent ({\bf B})
	Let $\Psi: \mathscr{H} \to 2^{ \mathscr{H}}$ be an upper semi-continuous set-valued mapping with 
	nonempty, convex, and compact values. Assume that there exists 
	$z^*\in \mathscr{H}$ such that $0\in\Psi(z^*)$. Moreover, suppose that there is a 
	positive definite, radially unbounded, locally Lipschitz continuous, and 
	regular function $\Xi: \mathscr{H}\to\R$ satisfying $\Xi(z^*)=0$ and
	\begin{equation*}
		\sup \dot{\Xi}(z)
		\leq
		-\left(
		M_1 \Xi(z)^{1-\frac{1}{\alpha}}
		+M_2 \Xi(z)^{1+\frac{1}{\alpha}}
		\right)
	\end{equation*}
	for all $z\in \mathscr{H}\setminus\{z^*\}$, where $M_1,M_2>0$ and $\alpha>1$. 
	Here,
	\begin{equation*}
		\dot{\Xi}(z)
		:=
		\left\{
		x\in\R:
		\exists w\in\Psi(z):		\langle v,w\rangle=x,
		\ \forall v\in\partial_c\Xi(z)
		\right\},
	\end{equation*}
	with $\partial_c\Xi(z)$ denoting Clarke’s generalized gradient of $\Xi$ at $z$.
	
	\medskip
	\noindent
	We next discretize \eqref{eq20} using a forward Euler scheme with step size 
	$h_n>0$, which leads to
	\begin{equation*}
		\frac{z(t_{n+1})-z(t_n)}{h_n}
		\in
		\Psi\big(z(t_n)\big).
	\end{equation*}
	By fixing $h_n=\beta$ and writing $z(t_n)=z_n$ for $n\in\N$, we obtain the 
	recursive inclusion
	\begin{equation}\label{eq24}
		z_{n+1}\in z_n+\beta\,\Psi(z_n).
	\end{equation}
	
	The following result, adapted from \cite{Garg21}, provides an explicit estimate 
	on the deviation of the sequence generated by \eqref{eq24}.
	
	\begin{theorem}\cite{Garg21, POL}\label{tr4}
		Let $z^*$ be an equilibrium point of \eqref{eq20}. 
		Suppose that Assumption {\bf (B)} holds and that $\Xi$ satisfies the quadratic 
		growth condition
		\begin{equation*}
			\Xi(z)\geq c\|z-z^*\|^2
		\end{equation*}
		for all $z\in \mathscr{H}$ and some $c>0$. 
		Then, for any initial point $z_0\in \mathscr{H}$ and any $\varepsilon>0$, there exists 
		$\beta^*>0$ such that, for all $\beta\in(0,\beta^*]$, the iterates generated 
		by \eqref{eq24} satisfy
		\begin{equation*}
			\|z_n-z^*\|<
			\begin{cases}
				\dfrac{1}{\sqrt{c}}
				\left(
				\sqrt{\dfrac{M_1}{M_2}}
				\tan\!\left(
				\dfrac{\pi}{2}
				-\dfrac{\sqrt{M_1M_2}}{\alpha}\beta n
				\right)
				\right)^{\frac{\alpha}{2}}
				+\varepsilon,
				& n\leq n^*,\\[2mm]
				\varepsilon,
				& n> n^*,
			\end{cases}
		\end{equation*}
		where
		\[
		n^*=
		\left\lceil
		\dfrac{\alpha\pi}{2\gamma\sqrt{M_1M_2}}
		\right\rceil.
		\]
	\end{theorem}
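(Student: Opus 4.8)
The statement is quoted from \cite{Garg21, POL}, so one may simply invoke those references; for completeness I outline the argument one would carry out. The plan is to track the scalar Lyapunov sequence $V_n := \Xi(z_n)$ and dominate it, up to an $O(\varepsilon)$ error controlled by the step size $\beta$, by the solution of a scalar comparison ODE whose closed form produces the stated $\tan$-envelope. As a preliminary, since $\Xi$ is radially unbounded the sublevel set $S_0 := \{z : \Xi(z) \le \Xi(z_0)+1\}$ is bounded, and by upper semicontinuity and compactness of the values of $\Psi$ the constant $K := \sup\{\|w\| : z \in S_0,\ w\in\Psi(z)\}$ is finite; hence every admissible step obeys $\|z_{n+1}-z_n\| \le \beta K$, and the one-step decrease obtained below keeps the whole trajectory inside $S_0$ once $\beta$ is small.

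The core is a one-step decrease estimate. Writing $z_{n+1} = z_n + \beta w_n$ with $w_n \in \Psi(z_n)$ and invoking Lebourg's mean value theorem for locally Lipschitz functions (valid because $\Xi$ is regular), there exist $\xi_n \in [z_n,z_{n+1}]$ and $v_n \in \partial_c\Xi(\xi_n)$ with $V_{n+1}-V_n \le \beta\langle v_n,w_n\rangle$. Since $\|\xi_n - z_n\| \le \beta K$ and $\partial_c\Xi$ is upper semicontinuous with compact values on $S_0$ (in the $C^1$ case used in this paper, $\partial_c\Xi=\{\nabla\Xi\}$ and this is just uniform continuity of $\nabla\Xi$ on $S_0$), for any $\delta_0>0$ there is $\beta^*>0$ such that for $\beta \le \beta^*$,
\[
  V_{n+1}-V_n \;\le\; \beta\big(\sup\dot\Xi(z_n)+\delta_0\big)
  \;\le\; -\beta\big(M_1 V_n^{1-1/\alpha}+M_2 V_n^{1+1/\alpha}\big)+\beta\delta_0,
\]
using Assumption {\bf (B)}. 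One then splits into a far regime $V_n \ge \delta$, where for $\delta_0$ small relative to $\delta$ this is a genuine decrease of order $\beta$ and $V_n$ is bounded by the forward-Euler iterate of $\dot y = -(M_1 y^{1-1/\alpha}+M_2 y^{1+1/\alpha})$, and a near regime $V_n < \delta$, where $V$ can grow by at most $O(\beta)$ per step so the iterates cannot leave a ball of radius $\sim\varepsilon$ provided $\delta$ and $\beta K$ are chosen small in terms of $\varepsilon$. This already yields $\|z_n-z^*\|<\varepsilon$ for $n>n^*$.

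It remains to produce the explicit envelope for $n \le n^*$. The comparison ODE integrates in closed form: with $u = y^{1/\alpha}$ it becomes the Riccati equation $\dot u = -\tfrac1\alpha(M_1+M_2 u^2)$, solved by $u(t) = \sqrt{M_1/M_2}\,\tan\!\big(\arctan(u_0\sqrt{M_2/M_1}) - \tfrac{\sqrt{M_1M_2}}{\alpha}t\big)$; letting the initial value $u_0 \to +\infty$ gives the worst-case envelope $y(t) \le \big(\sqrt{M_1/M_2}\,\tan(\tfrac\pi2 - \tfrac{\sqrt{M_1M_2}}{\alpha}t)\big)^{\alpha}$, which vanishes at $t = \tfrac{\alpha\pi}{2\sqrt{M_1M_2}}$. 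Controlling the $\beta$-discrepancy between the discrete $V_n$ and $y(\beta n)$ by the same uniform argument and combining with the quadratic growth $\Xi(z) \ge c\|z-z^*\|^2$ gives, after taking square roots, exactly the asserted bound with $t=\beta n$ and $n^* = \lceil \tfrac{\alpha\pi}{2\beta\sqrt{M_1M_2}}\rceil$ (the symbol $\gamma$ in the displayed expression for $n^*$ should read $\beta$).

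The main obstacle is the uniformity of the one-step error near $z^*$: there the decay rate $M_1 V^{1-1/\alpha}+M_2 V^{1+1/\alpha}$ tends to $0$ while the discretization error stays of fixed order $\beta$, so no single step size forces monotone decrease all the way down to $z^*$ — which is precisely why one only obtains convergence to an $\varepsilon$-neighborhood and why $\beta^*$ must depend on $\varepsilon$. Resolving it requires the far/near dichotomy above, together with the elementary observation that a locally Lipschitz $\Xi$ with $O(\beta)$-bounded per-step increments can gain at most $O(\beta)$ in value per iteration.
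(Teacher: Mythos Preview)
The paper does not prove this theorem; it is stated as a result adapted from \cite{Garg21, POL} with no proof supplied, and is then used as a black box in Theorem~\ref{dl cuoi}. You correctly identify this, so simply invoking the citations already matches the paper's treatment. The outline you add for completeness --- a one-step Lyapunov decrease via Lebourg's mean value theorem, comparison with the scalar ODE $\dot y = -(M_1 y^{1-1/\alpha}+M_2 y^{1+1/\alpha})$, its closed-form integration through $u=y^{1/\alpha}$ yielding the $\tan$-envelope, and a far/near dichotomy to absorb the discretization error into an $\varepsilon$-neighborhood --- is a faithful sketch of the argument in the cited references. You also correctly flag the typo: the $\gamma$ appearing in the displayed expression for $n^*$ should be the step size $\beta$.
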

	
	We now apply the above estimate to the discretized form of 
	\eqref{newydynamicalsystem}.
	
	\begin{theorem}\label{dl cuoi}
		Consider the forward discretization of \eqref{newydynamicalsystem} given by
		\begin{equation}\label{eq29}
			z_{n+1}
			=
			z_n
			-
			\beta \frac{K_p}{T_p}\,
			\omega(z_n)\Phi(z_n),
		\end{equation}
		where $\omega$ is defined in \eqref{dnrho}, $b_1,b_2>0$, $b_3>0$, 
		$p_1=1-\frac{2}{\alpha}$, $p_2=1+\frac{2}{\alpha}$, $\alpha\in(2,\infty)$, and 
		$\beta>0$ denotes the step size. 
		Assume that Assumption {\bf (A)} holds. 
		Then, for any $z_0\in \mathscr{H}$ and any $\varepsilon>0$, there exist constants 
		$\alpha>2$, $M_1,M_2>0$, and $\beta^*>0$ such that, for all 
		$\beta\in(0,\beta^*]$,
		\begin{equation*}
			\|z_n-z^*\|<
			\begin{cases}
				\sqrt{2}
				\left(
				\sqrt{\dfrac{M_1}{M_2}}
				\tan\!\left(
				\dfrac{\pi}{2}
				-\dfrac{\sqrt{M_1M_2}}{\alpha}\gamma n
				\right)
				\right)^{\frac{\alpha}{2}}
				+\varepsilon,
				& n\leq n^*,\\[2mm]
				\varepsilon,
				& n> n^*,
			\end{cases}
		\end{equation*}
		where
		\[
		n^*=
		\left\lceil
		\dfrac{\alpha\pi}{2\gamma\sqrt{M_1M_2}}
		\right\rceil,
		\]
		$z_n$ is defined by \eqref{eq29} with initial point $z_0$, and $z^*\in \mathscr{H}$ is 
		the unique solution of the inclusion problem \eqref{inc1}.
	\end{theorem}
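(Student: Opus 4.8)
The plan is to recognize \eqref{eq29} as a particular instance of the abstract forward Euler recursion \eqref{eq24} and then to invoke Theorem~\ref{tr4}. Concretely, I would set $\Psi(z):=-\frac{K_p}{T_p}\,\omega(z)\,\Phi(z)$, so that \eqref{eq29} reads $z_{n+1}\in z_n+\beta\,\Psi(z_n)$, and take as Lyapunov function the same one used in Theorem~\ref{tr2}, namely $\Xi(z)=\tfrac12\|z-z^*\|^2$, where $z^*$ is the unique solution of \eqref{inc1} furnished by Theorem~\ref{rem2}. This $\Xi$ is positive definite, radially unbounded, of class $C^{1}$ (hence locally Lipschitz and regular), and obeys the quadratic growth bound $\Xi(z)\ge\tfrac12\|z-z^*\|^{2}$, i.e.\ the hypothesis of Theorem~\ref{tr4} with $c=\tfrac12$.

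The first substantive step is to verify Assumption~{\bf (B)} for this $\Psi$ and $\Xi$. Since $\Phi=Id-J_{\gamma F}(Id-\gamma G)$ is globally (a fortiori locally) Lipschitz by Lemma~\ref{GENMON}, and $\langle z-z^*,\Phi(z)\rangle\ge(1-c)\|z-z^*\|^{2}>0$ for all $z\ne z^*$ by Part~\eqref{pdtr1} of Lemma~\ref{tr1}, Proposition~\ref{unique sol da sy} ensures that $z\mapsto\omega(z)\Phi(z)$, and hence $\Psi$, is continuous on all of $\mathscr{H}$; being single-valued with $\Psi(z^*)=0$, it automatically carries the nonempty, convex, compact, upper semicontinuous structure required in {\bf (B)}, and because $\Xi$ is smooth one has $\partial_c\Xi(z)=\{z-z^*\}$, so the Clarke set-valued derivative in {\bf (B)} degenerates to $\dot{\Xi}(z)=\langle z-z^*,\Psi(z)\rangle$. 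I would then rerun the computation \eqref{eq12}--\eqref{dgV} from the proof of Theorem~\ref{tr2}: discarding the nonnegative $b_3$-term (legitimate since $b_3>0$, $p_3\ge 0$, and $\|\Phi(z)\|>0$ off the equilibrium) and using Part~\eqref{petr1} of Lemma~\ref{tr1} to trade powers of $\|\Phi(z)\|$ for powers of $\|z-z^*\|$, one reaches
\[
\dot{\Xi}(z)\le-\Bigl(M_1\,\Xi(z)^{\frac{1+p_1}{2}}+M_2\,\Xi(z)^{\frac{1+p_2}{2}}\Bigr),
\]
with $M_1=2^{\frac{1+p_1}{2}}\tfrac{K_p}{T_p}\tfrac{b_1(1-c)}{(1+c)^{1-p_1}}>0$ and $M_2=2^{\frac{1+p_2}{2}}\tfrac{K_p}{T_p}(1-c)^{p_2}b_2>0$. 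The prescribed values $p_1=1-\tfrac{2}{\alpha}$, $p_2=1+\tfrac{2}{\alpha}$ with $\alpha>2$ guarantee $p_1\in(0,1)$ and $p_2>1$, and are chosen precisely so that the two exponents become $1-\tfrac1\alpha$ and $1+\tfrac1\alpha$ — exactly the decay form demanded in {\bf (B)}.

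With Assumption~{\bf (B)} established and the quadratic growth constant equal to $\tfrac12$, Theorem~\ref{tr4} applies directly and returns, for every $z_0\in\mathscr{H}$ and every $\varepsilon>0$, a threshold $\beta^{*}>0$ such that the stated two-regime estimate holds for all $\beta\in(0,\beta^{*}]$; the prefactor $1/\sqrt{c}=\sqrt2$ accounts for the $\sqrt2$ in the displayed bound, and the index $n^{*}$ is inherited verbatim from Theorem~\ref{tr4}. I do not expect a deep obstacle here; the points that call for the most care are making explicit the reduction from the differential-inclusion/Clarke-gradient formulation of {\bf (B)} to the present smooth, single-valued situation, and confirming that the $b_3$-term — which never appears in the exponent bookkeeping — is genuinely nonnegative so that discarding it in the Lyapunov inequality is harmless.
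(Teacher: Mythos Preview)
Your proposal is correct and follows essentially the same approach as the paper: recast \eqref{eq29} as an instance of \eqref{eq24}, take $\Xi(z)=\tfrac12\|z-z^*\|^2$ so that the quadratic growth constant is $c=\tfrac12$ (giving the factor $\sqrt2$), reuse the Lyapunov estimate \eqref{dgV} from the proof of Theorem~\ref{tr2} with the specific choices $p_1=1-\tfrac{2}{\alpha}$, $p_2=1+\tfrac{2}{\alpha}$ so that the exponents become $1\mp\tfrac{1}{\alpha}$, and conclude by Theorem~\ref{tr4}. You are in fact more explicit than the paper in checking that the single-valued continuous $\Psi$ satisfies the set-valued hypotheses of Assumption~{\bf (B)} and that the Clarke derivative collapses to $\langle z-z^*,\Psi(z)\rangle$, and in justifying why the $b_3$-term may be dropped; these are exactly the points of care worth flagging, but they introduce no new ideas beyond the paper's argument.
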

	
	\begin{proof}
		According to the proof of Theorem~\ref{tr2}, inequality \eqref{dgV} holds for any 
		$r_1\in(0,1)$ and $r_2>1$. 
		Since $r_1=1-\frac{2}{\alpha}$ and $r_2=1+\frac{2}{\alpha}$, these conditions are 
		fulfilled for all $\alpha>2$. 
		Consequently, all assumptions of Theorem~\ref{tr4} are satisfied by selecting 
		the Lyapunov function $\Xi(z)=\frac{1}{2}\|z-z^*\|^2$, for which $c=\frac{1}{2}$. 
		The conclusion then follows directly from Theorem~\ref{tr4}.
	\end{proof}

	\section{Applications}\label{sec5}
	In this section, we demonstrate how the proposed prdefined-time stable differential equation \eqref{newydynamicalsystem} can be employed to solve several classes of optimization-related problems. 
	In particular, we focus on constrained optimization problems (COPs), mixed variational inequality problems (MVIPs), and classical variational inequality problems (VIPs).
	
	\subsection{Application to constrained optimization problems}
	
	We first consider the constrained optimization problem
	\begin{equation}\label{cop1}
		\min_{z\in \mathscr{H}}\; h(z)+\phi(z),
	\end{equation}
	where $h:\mathscr{H}\to\R$ is a continuously differentiable and convex function, while 
	$\phi:\mathscr{H}\to\R$ is proper, convex, and lower semicontinuous. 
	Note that $\phi$ is not necessarily differentiable, and problem \eqref{cop1} reduces to an unconstrained optimization problem when $\phi\equiv 0$.
	
	Define the operator $A:=\partial \phi+\nabla h$. 
	Then solving \eqref{cop1} is equivalent to finding $z^*\in \mathscr{H}$ such that
	\[
	0\in A(z^*),
	\]
	which coincides with the inclusion problem \eqref{inc1} by taking $F=\partial\phi$ and $G=\nabla h$.
	In this setting, the resolvent operator satisfies
	\[
	J_{\gamma F}(z-\gamma G(z))=\prox_{\gamma\phi}(z-\gamma\nabla h(z)).
	\]
	Consequently, the operator $\Phi$ appearing in \eqref{newydynamicalsystem} takes the form
	\begin{equation}\label{dDNMCOP}
		\Phi(z)=z-\prox_{\gamma\phi}(z-\gamma\nabla h(z)).
	\end{equation}
	
	Following \cite{JU5}, we impose the following condition.
	
	\begin{itemize}
		\item[{\bf (A1)}] The function $ \mathscr{H}$ is strongly monotone with constant $\eta>0$, its gradient $\nabla h$ is Lipschitz continuous with constant $L>0$, and the stepsize $\gamma$ satisfies $\gamma L^2<2\eta$.
	\end{itemize}
	
	Since strong monotonicity of $ \mathscr{H}$ implies strong monotonicity of $\nabla h$ with the same modulus, assumptions {\bf(A)} and {\bf(A')} are fulfilled whenever {\bf(A1)} holds.
	Therefore, Theorem~\ref{tr2} can be directly applied to obtain fixed-time convergence for problem \eqref{cop1}.
	
	\begin{proposition} Consider the constrained optimization problem \eqref{cop1}. 	Assume that condition {\bf(A1)} holds. 
		Let $z^*$ be a solution of \eqref{cop1}.  
		Then $z^*$ is a fixed-time stable equilibrium point of the differential equation \eqref{newydynamicalsystem} with $\Phi$ defined by \eqref{dDNMCOP}. Moreover, the settling time satisfies
		\[
		T(z(0))\leq T_{\max}
		=\frac{1}{M_1(1-r_1)}+\frac{1}{M_2(r_2-1)},
		\]
		where $M_1>0$, $M_2>0$, $r_1\in(0.5,1)$, and $r_2>1$.
		
		In addition, if $r_1=1-\frac{1}{2\alpha}$ and $r_2=1+\frac{1}{2\alpha}$ for some $\alpha>1$, then
		\[
		T(z(0))\leq T_{\max}=\frac{\pi\alpha}{\sqrt{M_1M_2}}.
		\]
		
		Furthermore, when $p_3=0$, the equilibrium point of \eqref{newydynamicalsystem} is predefined-time stable, with the constant $K_p$ given in \eqref{pretime}.
	\end{proposition}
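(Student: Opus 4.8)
The plan is to reduce the statement to a direct invocation of Theorem~\ref{tr2}, the only real work being the verification that the data $(F,G,\gamma)=(\partial\phi,\nabla h,\gamma)$ satisfies Assumption~{\bf(A)} whenever {\bf(A1)} holds. First I would record the monotonicity moduli: since $\phi$ is proper, convex and l.s.c., its subdifferential $F=\partial\phi$ is maximal monotone, hence maximal $\eta_F$-monotone with $\eta_F=0$; strong convexity of $h$ with modulus $\eta$ is equivalent to $\eta$-strong monotonicity of $\nabla h$, so $G=\nabla h$ is $\eta_G$-monotone with $\eta_G=\eta>0$, and by {\bf(A1)} it is $L$-Lipschitz continuous. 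The two inequalities in {\bf(A)} then read $1+\gamma\eta_F=1>0$ and $2(\eta_F+\eta_G)+\gamma\eta_F^2-\gamma L^2=2\eta-\gamma L^2>0$, the latter being exactly the stepsize restriction $\gamma L^2<2\eta$ imposed in {\bf(A1)}; this is precisely the instance of {\bf(A)} singled out in the remark following its statement ($F$ maximal monotone, $G$ strongly monotone, $\gamma\in(0,2\eta_G/L^2)$).

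Next I would identify the dynamics and the equilibrium. By Fermat's rule together with the subdifferential sum rule $\partial(h+\phi)=\nabla h+\partial\phi$ (valid since $\mathrm{dom}\,h=\mathscr{H}$), a point $z^*$ solves \eqref{cop1} if and only if $0\in\partial\phi(z^*)+\nabla h(z^*)$, i.e.\ $z^*$ solves the inclusion \eqref{inc1} with $F=\partial\phi$, $G=\nabla h$. Because the resolvent of $\partial\phi$ is the proximity operator, $J_{\gamma F}(z-\gamma G(z))=\prox_{\gamma\phi}(z-\gamma\nabla h(z))$, so the map $\Phi$ of \eqref{newydynamicalsystem} coincides with the expression \eqref{dDNMCOP}. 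Combining Theorem~\ref{rem2} (under {\bf(A)} the inclusion \eqref{inc1} admits a unique solution) with Remark~\ref{rmsol} (equilibria of \eqref{newydynamicalsystem} are exactly the zeros of $F+G$), the point $z^*$ is the unique equilibrium point of \eqref{newydynamicalsystem}.

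With Assumption~{\bf(A)} verified and $z^*$ established as the unique equilibrium, Theorem~\ref{tr2} applies verbatim with $\Phi$ given by \eqref{dDNMCOP}: $z^*$ is a fixed-time stable equilibrium with settling time bounded by $\frac{1}{M_1(1-r_1)}+\frac{1}{M_2(r_2-1)}$ for suitable $M_1,M_2>0$, $r_1\in(0.5,1)$, $r_2>1$; the choice $r_1=1-\frac{1}{2\alpha}$, $r_2=1+\frac{1}{2\alpha}$ ($\alpha>1$) sharpens this to $T_{\max}=\pi\alpha/\sqrt{M_1M_2}$; and for $p_3=0$ one obtains predefined-time stability with control gain $K_p$ given by \eqref{pretime}. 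I do not anticipate a genuine obstacle: the argument is a bookkeeping specialization of Theorem~\ref{tr2}. The only points deserving a line of care are the reading of ``strong monotonicity of $h$'' as $\eta$-strong monotonicity of $\nabla h$, the use of the sum rule to pass between \eqref{cop1} and the inclusion \eqref{inc1}, and the identification $J_{\gamma\partial\phi}=\prox_{\gamma\phi}$; once these are in place, the three displayed conclusions are immediate transcriptions of the corresponding statements in Theorem~\ref{tr2}.
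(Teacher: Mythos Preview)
Your proposal is correct and follows essentially the same approach as the paper: the paper verifies that {\bf(A1)} implies Assumption~{\bf(A)} (with $\eta_F=0$, $\eta_G=\eta$, so that $2\eta-\gamma L^2>0$ is exactly the stepsize condition) and then invokes Theorem~\ref{tr2} directly, without further argument. If anything, your write-up is more careful than the paper's, explicitly justifying the sum rule, the identification $J_{\gamma\partial\phi}=\prox_{\gamma\phi}$, and the uniqueness of the equilibrium via Theorem~\ref{rem2} and Remark~\ref{rmsol}.
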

	
	\subsection{Application to mixed variational inequality problems}
	
	We next consider the mixed variational inequality problem of finding 
	\begin{equation}\label{mvip1}
		z^*\in \mathscr{H} \text{ such that }
		\langle G(z^*),z-z^*\rangle+\phi(z^*)-\phi(z)\geq0,
		\quad \forall z\in \mathscr{H},
	\end{equation}
	where $G:\mathscr{H}\to \mathscr{H}$ is an operator and $\phi:\mathscr{H}\to\R$ is proper, convex, and l.s.c.
	Problem \eqref{mvip1} can be equivalently reformulated as the inclusion problem \eqref{inc1} with $F=\partial\phi$.
	
	In this case, the operator $\Phi$ in \eqref{newydynamicalsystem} becomes
	\begin{equation}\label{PhiMVIP}
		\Phi(z)=z-J_{\gamma F}(z-\gamma G(z))
		=z-\prox_{\gamma\phi}(z-\gamma G(z)).
	\end{equation}
	
	Note that with $p_3=0$, this differential equation was previously investigated in \cite{Zheng} for predefined-time convergence of MVIPs.
	In contrast, the case $p_3=1$  was  studied \cite{JU2022} , where fixed-time stability of MVIPs was established. Assume that $\phi$ is convex, $G$ is $\eta$-strongly monotone and $L$-Lipschitz continuous, and that $\gamma\in(0,2\eta/L^2)$.
	Then, by Theorem~\ref{tr2}, the solution $z^*$ of \eqref{mvip1} is a fixed-time stable equilibrium point, and the settling time satisfies
	\[
	T(z(0))\leq T_{\max}
	=\frac{1}{M_1(1-r_1)}+\frac{1}{M_2(r_2-1)},
	\]
	for some $M_1,M_2>0$, $r_1\in(0.5,1)$, and $r_2>1$.
	Moreover, choosing $r_1=1-\frac{1}{2\alpha}$ and $r_2=1+\frac{1}{2\alpha}$ with $\alpha>1$ yields
	\[
	T(z(0))\leq T_{\max}=\frac{\pi\alpha}{\sqrt{M_1M_2}}.
	\]
	When $p_3=0$, predefined-time stability is also guaranteed.
	
	\subsection{Application to variational inequality problems}
	
	Finally, we consider the variational inequality problem:
	\begin{equation}\label{vip1}
		\text{find } z^*\in\Omega \text{ such that }
		\langle G(z^*),z-z^*\rangle\geq0,
		\quad \forall z\in\Omega,
	\end{equation}
	where $\Omega\subset H$ is nonempty, closed, and convex, and $G:\Omega\to \mathscr{H}$.
	Problem \eqref{vip1} is a special case of \eqref{mvip1} with $\phi\equiv0$.
	
	The variational inequality \eqref{vip1} can be written as the inclusion $0\in (N_\Omega+G)(z)$.
	In this case, the resolvent reduces to the metric projection,
	\[
	J_{\gamma N_\Omega}(z-\gamma G(z))=P_\Omega(z-\gamma G(z)),
	\]
	and the operator $\Phi$ becomes
	\begin{equation}\label{dnm3}
		\Phi(z)=z-P_\Omega(z-\gamma G(z)).
	\end{equation}
	
If  $G$ is strongly monotone with modulus $\eta>0$ and Lipschitz continuous with constant $L>0$, then	Theorem~\ref{tr2} ensures fixed-time convergence of \eqref{dnm3}. Also when $b_3=0$ the differential equation \eqref{newydynamicalsystem} with \(\Phi\) defined by \eqref{dnm3} is the one studied in  \cite{Garg21}.
	Again, if $p_3=0$, the trajectory generated by the differential equation
	\eqref{newydynamicalsystem} reaches a solution of the inclusion problem
	\eqref{inc1} within a predefined time. In this case, the predefined-time
	constant $K_p$ is given by \eqref{pretime}.

\section{Numerical Illustration}\label{num}
This section is devoted to numerical experiments illustrating the theoretical findings obtained in the preceding sections.

The continuous-time dynamical systems under consideration are solved numerically by means of the $scipy.integrate.solve\_ivp$ function
 from the SciPy library, implemented on Google Colab, over the interval $[0, T_{\max}]$. The following example is adopted from \cite{NHVF}.

\begin{example}\label{ex1}
We consider the monotone inclusion problem \eqref{inc1} with
\[
F = I, \qquad G = M^{\top}M,
\]
where $I$ denotes the identity matrix of size $n$ and $M$ is an arbitrary matrix in
$\mathbb{R}^{m\times n}$. It is straightforward to verify that the inclusion
\[
0 \in F(z) + G(z)
\]
admits a unique solution given by $z^* = 0$.

For the numerical implementation, we set $m=10$, $n=8$, $b_1=20$, $b_2=200$,
$p_1=0.99$, $p_2=1.01$, $p_3=0$, and $\beta = 0.005$.
The dynamical system \eqref{newydynamicalsystem} is then applied with
$T_{\max}=5$.
Figure~\ref{fig1} illustrates the trajectory of the generated solution, which
converges to the equilibrium point $z^*$, together with the corresponding error
evolution $\|z(t)-z^*\|^2$.

\begin{figure}
	\begin{centering}
		\includegraphics[width=\textwidth]{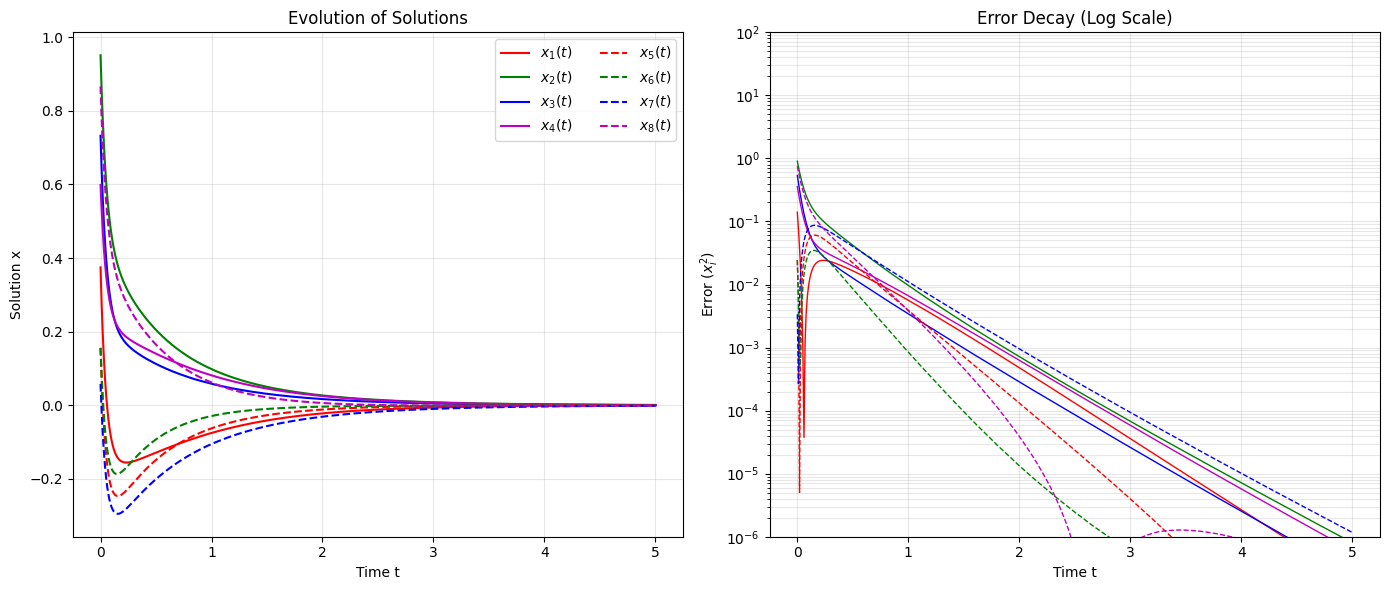}
		\par\end{centering}
	\protect\caption{Transient responses and error decay of the dynamical system \eqref{newydynamicalsystem} for Example~\ref{ex1}.
		\label{fig1}}
\end{figure}
To assess the convergence rate of the proposed method in comparison with the one
introduced in \cite{NHVF}, we perform numerical experiments with different choices
of the parameters $b_3$ and $p_3$. It is worth noting that, when $b_3 = 0$, the
dynamical system \eqref{newydynamicalsystem} reduces to the model studied in
\cite{NHVF}. The numerical results indicate that introducing the additional term
associated with $p_3 \neq 0$ leads to a faster convergence rate.

Figure~\ref{fig1nam} depicts the evolution of the error for $m=10$, $n=8$,
$b_1=20$, $b_2=200$, $p_1=0.99$, $p_2=1.01$, and $\beta = 0.005$, under different
choices of $(b_3,p_3)$, namely $(b_3,p_3)=(0,0)$, $(5,0)$, and $(5,1)$.
The dynamical system \eqref{newydynamicalsystem} is simulated over the time
interval $[0,T_{\max}]$ with $T_{\max}=5$. The results clearly show that the error
$\|z(t)-z^*\|^2$ decays more rapidly when $p_3\neq 0$, confirming the improved
convergence behavior of the proposed scheme.

\begin{figure}[H]
	\begin{centering}
		\includegraphics[width=\textwidth]{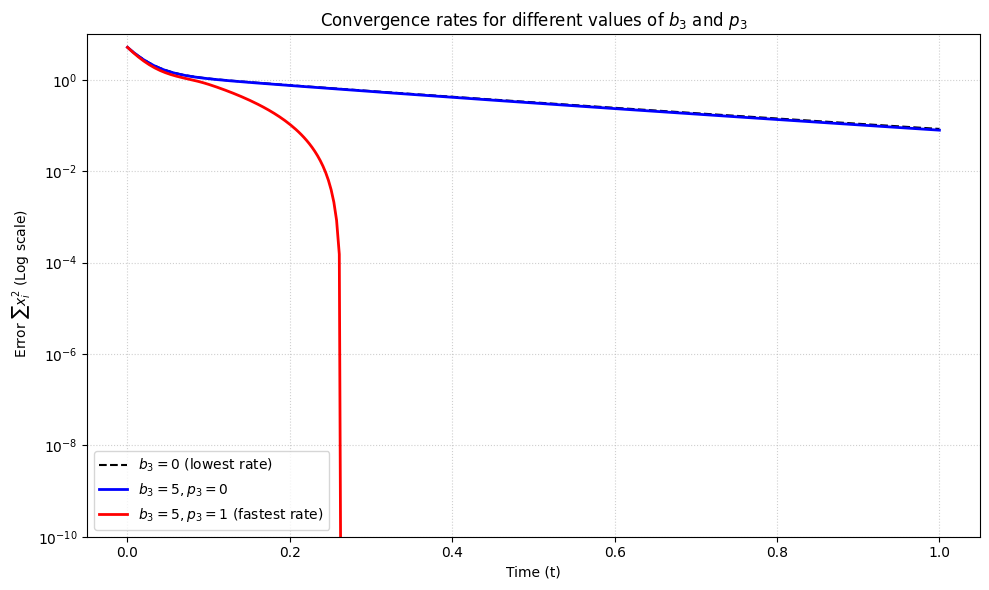}
		\par\end{centering}
	\protect\caption{Convergence
		rate with different values of parameters of dynamic system \eqref{newydynamicalsystem} for Example \ref{ex1} .
		\label{fig1nam}}
\end{figure}

To investigate the influence of $p_3$ on the convergence rate, Figure~\ref{fig3}
illustrates the convergence behavior for different values of $p_3$, while all
other parameters are kept fixed.

From Figure~\ref{fig3}, we observe that the largest value \(p_3 = 1.2\) yields the fastest convergence rate compared with \(p_3 = 1\) and \(p_3 = 0.5\). Moreover, the convergence rate for \(p_3 = 1\) is higher than that obtained for \(p_3 = 0.5\).

\begin{figure}
	\begin{centering}[H]
		\includegraphics[width=\textwidth]{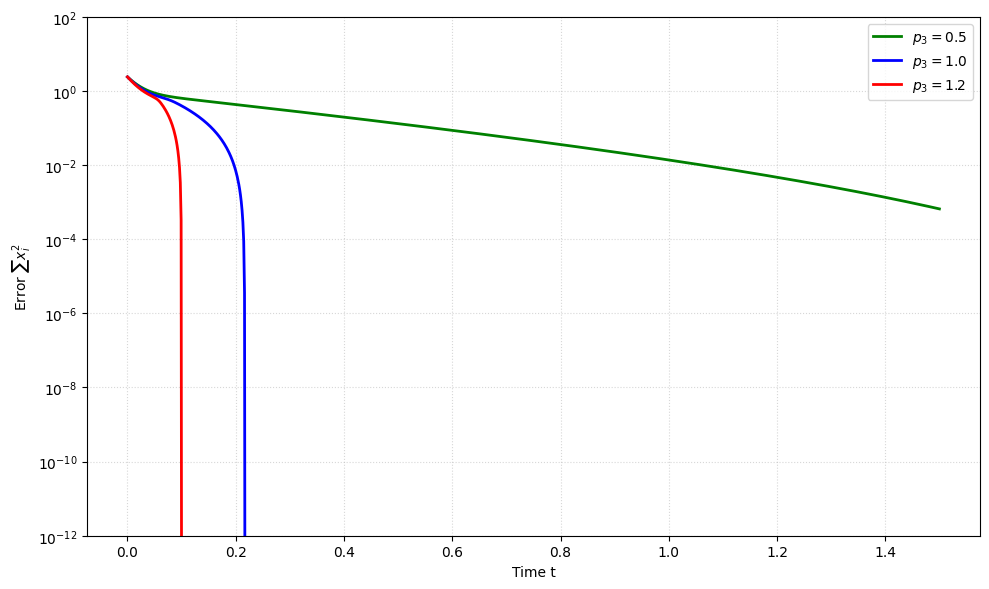}
		\par\end{centering}
	\protect\caption{Convergence
		rate with different values of $p_3$ of dynamic system \eqref{newydynamicalsystem} for Example \ref{ex1}.
		\label{fig3}}
\end{figure}

\newpage

\section{Conclusion}\label{sec6}

This paper proposes a modified forward–backward splitting dynamical system with predefined-time stability guarantees. Under a generalized monotonicity framework for the involved operators, we establish the existence and uniqueness of system trajectories and prove that their convergence to the unique solution of the associated inclusion problem is achieved within a user-prescribed convergence time, independent of initial conditions. Furthermore, we show that the forward Euler scheme provides a consistent time discretization of the proposed continuous-time dynamics. Several applications are also discussed, illustrating how mixed variational inequalities, variational inequalities, and convex optimization problems can be naturally embedded into the proposed inclusion framework.

Future research will focus on extending the predefined-time stability analysis of the modified forward–backward splitting dynamics to infinite-dimensional Hilbert and Banach spaces, as well as on relaxing the underlying assumptions by adopting weaker conditions similar to those considered in related works.

\end{example}

	\section*{Declarations} 
	{\bf Conflict of interest} The authors declare no competing interests.	
	
	\noindent {\bf Data availability statement:} Data sharing is not applicable to this article as no datasets were generated or analyzed during the current study. 
	
	\noindent {\bf Financial interests: } The authors thank Ho Chi Minh City University of Technology and Engineering for supporting this research.

\end{document}